\newtheorem{theorem}{Theorem}[section]
\newtheorem{lemma}{Lemma}[section]
\newtheorem{remark}{Remark}[section]
\newtheorem{corollary}{Corollary}[section]
\newtheorem{proposition}{Proposition}[section]
\numberwithin{equation}{section}
\begin{document}
	
\title{New  inequalities  for  sector  matrices \\  applying Garg-Aujla  inequalities}
\author{ Leila Nasiri$^{1*}$ and   Shigeru Furuichi$^{2}$}
\subjclass[2010]{Primary  15A45, Secondary  15A15.}
\keywords{Sector matrix, accretive matrix, singular value inequality, determinant  inequality, Kantorovich constant, positive linear/multilinear map}

\begin{abstract}
In  this  paper,  
we  give new singular value inequalities and  determinant  inequalities including the inverse of $A$, $B$ and $A+B$  for sector   matrices. 
We also give the matrix inequalities for sector  matrices with a   positive multilinear map. Our obtained results give generalizations for the known results.
\end{abstract}
\maketitle
\pagestyle{myheadings}
\markboth{\centerline {New  inequalities  for  sector  matrices applying Garg-Aujla  inequalities}}
{\centerline {L. Nasiri  and  S. Furuichi}}
\bigskip
\bigskip
\section{Introduction and preliminaries}
Let  $ \mathbb{M}_{n}$ and  $ \mathbb{M}^{+}_{n}$  denote  the  set  of  all  $n \times n$  matrices    and  the  set  of  all  $n \times n$ positive  semi--definite  matrices  with  entries  in  $\mathbb C,$  respectively.    $A \ge 0$ means $A \in \mathbb{M}^{+}_{n}$.  $A >0$ also means $A \in \mathbb{M}^{+}_{n}$ and $A$ is  invertible.  
 For  $A \in \mathbb{M}_{n},$   the  famous  Cartesian  decomposition  of $A$  is  presented  as  
$$A=\Re A+i \Im A,$$
where   the  matrices $\Re A=\dfrac{A+A^{*}}{2}$ and  $\Im A=\dfrac{A-A^{*}}{2i}$  are  the  real  and  imaginary  parts
of $A,$  respectively.  The  matrix   $  A \in \mathbb{M}_{n}$  is  called  an accretive,  if 
$\Re A$  is  a positive  definite.  
The  matrix   $  A \in \mathbb{M}_{n}$  is  called  an  accretive-dissipative,  if 
both  $\Re A$  and  $\Im A$  are  positive  definite.    
For  $\alpha \in \left[0,\dfrac{\pi}{2}\right),$   define  a  sector  as  follows:
$$S_{\alpha}=\{z\in \mathbb C: \Re z >0, |\Im z| \leq  \tan \alpha (\Re z) \}.$$
Here,  we  recall  that  the  numerical  range  of  $A \in \mathbb{M}_{n}$ is defined  by 
$$W(A)=\{x^{*}Ax: x\in {\mathbb C}^{n}, x^{*}x=1\}.$$
 The  matrix   $  A \in \mathbb{M}_{n}$  is  called  sector,  if the numerical  range of $A$  is  contained  in  a 
sector  $S_{\alpha}.$  In other words,  
$W(A) \subset S_{\alpha}$ for some $\alpha \in \left[0,\dfrac{\pi}{2}\right)$.   Clearly,  any  sector  matrix  is   accretive  with  extra  information  about  
the  angle  $\alpha$. The sector  matrix can be regarded as a kind of generalizations of the positive definite matrix, in the sense that a sector matrix becomes a positive definite matrix when $\alpha = 0$.

 In this paper, we study singular value inequalities and determinant  inequalities for sector  matrices. We also study the inequalities for a positive linear and multilinear map.

In the paper \cite{GargIAujlaJ},  Garg and  Aujla  obtained  the  following  inequalities, where the symbol $s_j(X)$ for $j=1,\cdots, n$,   represents $j$-th largest singular value of $X\in \mathbb{M}_n$.

\begin{eqnarray}
&& \prod^{k} _{j=1} s_{j}(|A + B|^{r})  \leq   \prod^{k}_{j=1}   s_{j}(I_{n}+|A|^{r}) 
\prod^{k}_{j=1}  s_{j}(I_{n}+|B|^{r}), \quad (1 \leq  k  \leq n,1 \leq r \leq 2 )\label{ineq01};\\
&& \prod^{k} _{j=1} s_{j}(I_{n}+f(|A + B|))  \leq    \prod^{k} _{j=1}  s_{j}(I_{n}+f(|A|) 
\prod^{k}_{j=1}  s_{j}(I_{n}+f(|B|), \quad (k=1,\cdots,n), \label{ineq02}
\end{eqnarray}
where  $A,B \in \mathbb M_{n}$  and  $f:[0,\infty)\rightarrow  [0,\infty)$ is an  operator  concave  function. 
By  taking  $A, B \geq 0, r=1$ and $f(t)=t$ in the  inequalities \eqref{ineq01} and \eqref{ineq02},  we  have  
\begin{eqnarray}
&& \prod^{k} _{j=1} s_{j}(A + B)  \leq   \prod^{k}_{j=1}  s_{j}(I_{n}+A) 
\prod^{k}_{j=1}  s_{j}(I_{n}+B), \quad (k=1,\cdots,n)\label{ineq03};\\
&& \prod^{k} _{j=1} s_{j}(I_{n}+A + B)  \leq   \prod^{k}_{j=1}  s_{j}(I_{n}+A) 
\prod^{k}_{j=1}  s_{j}(I_{n}+B), \quad (k=1,\cdots,n) \label{ineq04}.
\end{eqnarray}

Before we state our results, we here summarize some lemmas which  will be necessary to prove our results in this paper.
We should note that the expression $s_{j} (\Re A)$ may be replaced by $\lambda_{j} (\Re A)$, where $\lambda_j(X)$ represents the $j$-th largest eigenvalue of $X \in \mathbb{M}_n$, in Lemma \ref{lemma1.1} and \ref{lemma1.2}. Also we may replace $s_j(\cdot)$ by $\lambda_j(\cdot)$ in \eqref{ineq03} and \eqref{ineq04}. Throughout this paper, we use the symbol $s_j(\cdot)$ even when we can use $\lambda_j(\cdot)$, since we think it is better outlook to read this paper.

\begin{lemma}{\bf (\cite[Proposition III.5.1]{BhatiaR})}\label{lemma1.1}
For $A \in \mathbb{M}_{n}$, we have $s_{j} (\Re A)  \leq  s_{j}(A)$. 
Thus we have, $\det (\Re A) \leq |\det A|$ for  an accretive  matrix $A \in \mathbb{M}_{n}$.
\end{lemma}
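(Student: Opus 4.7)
The plan is to establish the eigenvalue form $\lambda_j(\Re A) \leq s_j(A)$ by the Courant--Fischer variational principle, and then specialise to the accretive setting in which $\lambda_j(\Re A) = s_j(\Re A)$. The authors' remark about interchanging $s_j$ and $\lambda_j$ on $\Re A$ is exactly what legitimises this identification in their setting: for accretive $A$, all eigenvalues of the positive definite matrix $\Re A$ are strictly positive and the two notions coincide.

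Applying Courant--Fischer to the Hermitian matrix $\Re A$ gives
$$\lambda_j(\Re A)=\min_{\dim M=n-j+1}\;\max_{x\in M,\,\|x\|=1} x^{*}(\Re A)\,x,$$
while the analogous min--max formula for singular values gives $s_j(A)=\min_{\dim M=n-j+1}\max_{x\in M,\,\|x\|=1}\|Ax\|.$ The engine of the proof is the pointwise bound
$$x^{*}(\Re A)\,x=\Re(x^{*}Ax)\leq|x^{*}Ax|\leq\|x\|\cdot\|Ax\|=\|Ax\|\quad (\|x\|=1),$$
which is just the Cauchy--Schwarz inequality. Taking the maximum over a fixed subspace $M$ and then the minimum over all $(n-j+1)$-dimensional $M$ preserves the inequality, yielding $\lambda_j(\Re A)\leq s_j(A)$. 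Since $A$ is accretive, $\Re A>0$, so $\lambda_j(\Re A)=s_j(\Re A)$, and the first assertion follows.

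For the determinant consequence, multiply the $n$ resulting inequalities:
$$\det(\Re A)=\prod_{j=1}^{n}\lambda_j(\Re A)=\prod_{j=1}^{n}s_j(\Re A)\leq\prod_{j=1}^{n}s_j(A)=|\det A|,$$
where the final identity is the standard fact $|\det A|^{2}=\det(A^{*}A)=\prod_j s_j(A)^{2}$. The argument has no serious obstacle beyond spotting the Cauchy--Schwarz step; the only conceptual point worth flagging is that the accretivity hypothesis is what lets us rewrite $\lambda_j(\Re A)$ as $s_j(\Re A)$, since without positivity of $\Re A$ the eigenvalues can be negative and the literal singular-value statement need not hold.
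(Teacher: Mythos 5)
Your proof is correct; the paper itself offers no argument for this lemma (it is quoted from Bhatia, Proposition III.5.1), and what you give is precisely the classical Fan--Hoffman proof used there: the Courant--Fischer min--max characterisations combined with the pointwise Cauchy--Schwarz bound $x^{*}(\Re A)x=\Re(x^{*}Ax)\leq \|Ax\|$, followed by multiplying the $n$ positive inequalities to get the determinant statement. Your closing caveat --- that the identification $\lambda_j(\Re A)=s_j(\Re A)$ requires $\Re A>0$ --- is exactly the point the authors address in their remark preceding the lemma, and it covers every use of the lemma in the paper, since it is only ever applied to accretive or sector matrices.
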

\begin{lemma}{\bf (\cite[Theorem 3.1]{DrurySLinM}, \cite[Lemma 2.6]{MLinM})}\label{lemma1.2}
Let  $A \in \mathbb{M}_{n}$ with $W(A)\subset S_{\alpha}$.   We have
$s_{j}(A)  \leq \sec^{2}(\alpha) s_{j} (\Re (A))$ and $|\det A|  \leq \sec^{n}(\alpha) \det (\Re A)$.
\end{lemma}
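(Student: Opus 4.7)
My plan is to base the argument on the Schur-type factorization
\[
A = (\Re A)^{1/2}(I + iK)(\Re A)^{1/2}, \qquad K := (\Re A)^{-1/2}(\Im A)(\Re A)^{-1/2},
\]
which is available because a sector matrix is accretive and hence $\Re A$ is positive definite. The Hermitian matrix $K$ satisfies $\|K\|_{\infty}\leq \tan\alpha$: the condition $W(A)\subset S_\alpha$ is equivalent to the operator inequality $-\tan(\alpha)\,\Re A \leq \Im A \leq \tan(\alpha)\,\Re A$, and conjugation by $(\Re A)^{-1/2}$ transfers it to $-\tan(\alpha)\,I\leq K\leq \tan(\alpha)\,I$.

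The determinant inequality drops out immediately. By multiplicativity,
\[
\det A=\det(\Re A)\cdot\det(I+iK),
\]
and since $I+iK$ is normal with eigenvalues $\{1+i\mu_j\}_{j=1}^{n}$, where the $\mu_j\in[-\tan\alpha,\tan\alpha]$ are the eigenvalues of $K$,
\[
|\det(I+iK)|=\prod_{j=1}^{n}\sqrt{1+\mu_j^{2}}\leq(1+\tan^{2}\alpha)^{n/2}=\sec^{n}(\alpha),
\]
so $|\det A|\leq\sec^{n}(\alpha)\det(\Re A)$.

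The singular value inequality is the delicate part. A natural starting point is the operator-norm bound $\|I+iK\|_{\infty}\leq\sec(\alpha)$, which follows from $(I+iK)^{*}(I+iK)=I+K^{2}\leq\sec^{2}(\alpha)\,I$. However, a direct application of Horn's product inequality $s_j(XY)\leq\|X\|_{\infty}s_j(Y)$ to the factorization yields only $s_j(A)\leq\sec(\alpha)\sqrt{\lambda_1(\Re A)\,\lambda_j(\Re A)}$, which is too weak because of the spurious $\lambda_1(\Re A)$ factor. The hard step is therefore to upgrade this to the individual bound $s_j(A)\leq\sec^{2}(\alpha)\,s_j(\Re A)$. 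For this I would follow the strategy in Drury-Lin \cite{DrurySLinM} and M.~Lin \cite{MLinM}: write $A^{*}A=(\Re A)^{1/2}(I-iK)(\Re A)(I+iK)(\Re A)^{1/2}$, use cyclic similarity to identify its eigenvalues with those of $W^{*}(\Re A)W\cdot(\Re A)$ where $W=I+iK$, and then combine the spectral estimate $\lambda_j\bigl(W^{*}(\Re A)W\bigr)\leq\sec^{2}(\alpha)\,\lambda_j(\Re A)$ with a log-majorization argument for products of positive semidefinite matrices to extract the sharp index-by-index bound.
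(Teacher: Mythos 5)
Your determinant argument is complete and correct. It differs mildly from the paper's: the paper invokes Zhang's decomposition $A=XZX^*$ with $Z=\mathrm{diag}\left(e^{i\theta_1},\dots,e^{i\theta_n}\right)$, $|\theta_j|\le\alpha$, while you use the congruence $A=(\Re A)^{1/2}(I+iK)(\Re A)^{1/2}$; both reduce the claim to the scalar bound $|1+i\mu|\le\sec(\alpha)$ for $|\mu|\le\tan\alpha$, so this half is fine. (Note the paper proves only this half itself and cites \cite{DrurySLinM} for the singular value half.)

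The gap is in your final step for the singular value inequality. From $s_j(A)^2=\lambda_j\bigl(W^*(\Re A)W\cdot \Re A\bigr)$ and $\lambda_j\bigl(W^*(\Re A)W\bigr)\le\sec^2(\alpha)\lambda_j(\Re A)$ you propose to conclude via ``a log-majorization argument \dots to extract the sharp index-by-index bound.'' This cannot work: log-majorization of products of positive semidefinite matrices controls only the partial products $\prod_{j=1}^k\lambda_j(PQ)\le\prod_{j=1}^k\lambda_j(P)\lambda_j(Q)$, and individual inequalities cannot be extracted from a majorization (they come for free only at $j=1$). Indeed the index-by-index statement $\lambda_j(PQ)\le\lambda_j(P)\lambda_j(Q)$ is simply false: take $P=\mathrm{diag}(2,1)$ and $Q=\mathrm{diag}(1,2)$, so that $\lambda_2(PQ)=2>1=\lambda_2(P)\lambda_2(Q)$. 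What your route actually yields is the product inequality $\prod_{j=1}^k s_j(A)\le\sec^k(\alpha)\prod_{j=1}^k\lambda_j(\Re A)$, which is not the statement of the lemma. The way to get the individual bound --- and the mechanism behind \cite[Theorem 3.1]{DrurySLinM} --- is to pass to the inverse. Your own factorization gives $\Re(A^{-1})=(\Re A)^{-1/2}(I+K^2)^{-1}(\Re A)^{-1/2}\ge\cos^2(\alpha)(\Re A)^{-1}$, since $(I+K^2)^{-1}\ge(1+\tan^2\alpha)^{-1}I$. Then, applying Lemma \ref{lemma1.1} to $A^{-1}$ together with Weyl monotonicity of eigenvalues,
\[
\frac{1}{s_{n+1-j}(A)}=s_j(A^{-1})\ge\lambda_j\bigl(\Re(A^{-1})\bigr)\ge\cos^2(\alpha)\,\lambda_j\bigl((\Re A)^{-1}\bigr)=\frac{\cos^2(\alpha)}{\lambda_{n+1-j}(\Re A)},
\]
which is exactly $s_j(A)\le\sec^2(\alpha)\,\lambda_j(\Re A)$ for every $j$. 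Replacing your last step by this argument closes the gap.
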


We should note that $|\det A|  \leq \sec^{2n}(\alpha) \det (\Re A)$
 holds from $s_{j}(A)  \leq \sec^{2}(\alpha) s_{j} (\Re (A))$  consequently. But Lin proved the better bound as above. We give the proof of  $|\det A|  \leq \sec^{n}(\alpha) \det (\Re A)$ along to \cite[Lemma 2.6]{MLinM} for the convenience to the readers.
It is stated in \cite[Lemma 2.2]{MLinM} and proved in \cite[Theorem 2.1]{Zhang} that a sector matrix $A$ has a decomposition such as $A=XZX^*$ with an invertible matrix $X$ and the diagonal matrix $Z={\rm diag} \left(e^{i\theta_1},\cdots,e^{i\theta_n}\right)$ with $|\theta_j| \leq \alpha$ for all $j=1,\cdots,n$ and $\alpha \in [0,\pi/2)$. We firstly found that $|\det Z| = |e^{i\theta_1}\cdots e^{i\theta_n}|\leq 1$ and $\sec(\alpha) \Re(Z)={\rm diag}\left(\dfrac{\cos\theta_1}{\cos\alpha},\cdots,\dfrac{\cos\theta_n}{\cos\alpha}\right)$ which implies $\sec^n(\alpha)\det \Re(Z) \geq 1$, since $\cos\theta_j \geq \cos \alpha$ for $|\theta_j|\leq \alpha$ and $\alpha \in [0,\pi/2)$.  Thus we have
$\sec^n(\alpha)\det(\Re(Z))|\det(XX^*)| \geq |\det(XX^*)|\geq  |\det Z|\cdot|\det(XX^*)|=|\det(XZX^*)|=|\det A|$
which shows $\sec^{n}(\alpha) \det (\Re A) \geq |\det A|$, since $\det(\Re(Z))|\det(XX^*)| =|\det (X\Re(Z) X^*)| = |\det \Re(A)| = \det \Re(A)$.

\begin{lemma}{\bf (\cite[Lemma 2, Lemma 3]{LinM})}\label{lemma1.3}
Let  $A \in \mathbb{M}_{n}$ with $W(A)\subset S_{\alpha}$. Then  we have
$\Re (A^{-1}) \leq \Re^{-1} (A) \leq \sec^{2}(\alpha) \Re (A^{-1})$.
The first inequality holds for an accretive  matrix  $A \in \mathbb{M}_{n}$.
\end{lemma}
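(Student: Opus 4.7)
The plan is to reduce both inequalities to operator-norm estimates for $I \pm iY$, where $Y := B^{-1/2} C B^{-1/2}$ is the Hermitian congruence of $C := \Im A$ by $B := \Re A > 0$. The starting algebraic identity
\[
\Re(A^{-1}) \;=\; \tfrac{1}{2}\bigl(A^{-1} + (A^{*})^{-1}\bigr) \;=\; A^{-1}\,B\,(A^{*})^{-1},
\]
obtained from $A^{-1}+(A^*)^{-1} = A^{-1}(A^*+A)(A^*)^{-1}$, lets me conjugate by $B^{1/2}$ on both sides: the desired inequality $\Re(A^{-1}) \leq \Re^{-1}(A)$ becomes $XX^{*} \leq I$, and $\Re^{-1}(A) \leq \sec^{2}(\alpha)\,\Re(A^{-1})$ becomes $XX^{*} \geq \cos^{2}(\alpha)\,I$, where
\[
X \;:=\; B^{1/2} A^{-1} B^{1/2} \;=\; \bigl(B^{-1/2} A B^{-1/2}\bigr)^{-1} \;=\; (I + iY)^{-1}.
\]

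For the first inequality I only need $A$ accretive, i.e.\ $B > 0$ and $Y$ Hermitian. Because $Y = Y^{*}$, the cross terms in the expansion of $\|(I+iY)v\|^{2}$ cancel, leaving the Pythagorean identity $\|(I+iY)v\|^{2} = \|v\|^{2} + \|Yv\|^{2} \geq \|v\|^{2}$. This gives $\|X\| = \|(I+iY)^{-1}\| \leq 1$, i.e.\ $XX^{*} \leq I$, as required.

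For the second inequality I would feed the sector hypothesis into a norm bound on $Y$. The defining inequality of $S_{\alpha}$ reads $|\langle Cv,v\rangle| \leq \tan(\alpha)\,\langle Bv,v\rangle$ for every $v$; substituting $v = B^{-1/2}w$ converts this into $|\langle Yw,w\rangle| \leq \tan(\alpha)\,\|w\|^{2}$, and since $Y$ is Hermitian this yields the spectral bound $\|Y\| \leq \tan(\alpha)$. The same Pythagorean identity then gives
\[
\|(I - iY)w\|^{2} \;=\; \|w\|^{2} + \|Yw\|^{2} \;\leq\; \bigl(1+\tan^{2}(\alpha)\bigr)\,\|w\|^{2} \;=\; \sec^{2}(\alpha)\,\|w\|^{2},
\]
so $\|(X^{*})^{-1}\| = \|I - iY\| \leq \sec(\alpha)$, which rearranges via $w = X^{*}v$ to $\|X^{*}v\|^{2} \geq \cos^{2}(\alpha)\|v\|^{2}$, i.e.\ $XX^{*} \geq \cos^{2}(\alpha)\,I$.

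The main obstacle is psychological rather than technical: one must resist inverting $B+iC$ explicitly and instead use the identity $\Re(A^{-1}) = A^{-1}B(A^{*})^{-1}$, after which the whole proof collapses to the trivial computation $\|(I\pm iY)v\|^{2} = \|v\|^{2} + \|Yv\|^{2}$ together with the sector-to-spectrum translation $\|Y\| \leq \tan(\alpha)$.
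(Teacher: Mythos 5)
Your proof is correct. Note, however, that the paper does not prove this lemma at all: it is quoted directly from Lin's \emph{Some inequalities for sector matrices} (Lemma 2 and Lemma 3 there), so there is no in-paper argument to compare against. Your reduction to $X=(I+iY)^{-1}$ with $Y=B^{-1/2}CB^{-1/2}$ is essentially the standard route in the literature: the usual proof computes $(I+iY)^{-1}=(I+Y^{2})^{-1}-iY(I+Y^{2})^{-1}$, hence $\Re(A^{-1})=\bigl(B+CB^{-1}C\bigr)^{-1}$, and then reads off both inequalities from $B\le B+CB^{-1}C\le\sec^{2}(\alpha)\,B$, the last step using the same sector-to-spectrum translation $\|Y\|\le\tan\alpha$, i.e.\ $CB^{-1}C\le\tan^{2}(\alpha)\,B$. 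You replace that explicit inversion by the norm estimates $\|(I\pm iY)v\|^{2}=\|v\|^{2}+\|Yv\|^{2}$, which is a clean equivalent and arguably more transparent; the identity $\Re(A^{-1})=A^{-1}B(A^{*})^{-1}$, the congruence by $B^{1/2}$, and the passage from the numerical-range condition to $\|Y\|\le\tan\alpha$ (legitimate because $Y$ is Hermitian, so numerical radius equals norm) are all verified correctly. The only point worth making explicit is that accretivity ($\Re A>0$) already forces $A$ to be invertible, since $\Re(v^{*}Av)>0$ for $v\ne 0$ rules out $Av=0$; this is needed for $A^{-1}$ and $X$ to exist at all.
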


\begin{lemma}{\bf (\cite[Theorem 1]{RBhatiaFKittaneh},\cite[Corollary 1]{TAndoXZhan},\cite[Lemma 2.3]{MojB})}\label{lem22}
Let $A, B\in \mathbb{M}_{n}$ be positive definite  and $r>0$. Then we have the following:
\begin{itemize}
\item[(i)] $\left\| AB\right\| \leq \dfrac{1}{4}\left\|  A+B\right\| ^{2}$,
\item[(ii)] $\left\| A^{r}+B^{r} \right\| \leq \left\| (A+B)^{r}\right\|$ for $r \geq 1$,
\item[(iii)] $ A\leq r B \Leftrightarrow  \left\| A^{\frac{1}{2}}B^{-\frac{1}{2}}\right\| \leq r^{\frac{1}{2}}.$
\end{itemize}
\end{lemma}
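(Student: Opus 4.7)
Each of the three items is a classical inequality already present in the cited references, so my approach is to outline the standard derivation of each, relying on two well-known background tools: the $2\times 2$ block-matrix realization of the operator AM--GM inequality, and the Ando--Zhan inequality for nonnegative operator-monotone concave functions on $[0,\infty)$.

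For part (i), I would first rewrite $\|AB\|=\|A^{1/2}B^{1/2}\|^{2}$, using that $A^{1/2}BA^{1/2}=(A^{1/2}B^{1/2})(A^{1/2}B^{1/2})^{*}$ is similar to $AB$ and hence has the same spectrum, so both sides equal $\lambda_{1}(A^{1/2}BA^{1/2})$. The main step is then the Bhatia--Kittaneh operator AM--GM bound $\|A^{1/2}B^{1/2}\|\le\tfrac{1}{2}\|A+B\|$, which comes from observing that the Gram matrix $\begin{pmatrix}A & A^{1/2}B^{1/2}\\ B^{1/2}A^{1/2}& B\end{pmatrix}=\begin{pmatrix}A^{1/2}\\ B^{1/2}\end{pmatrix}\begin{pmatrix}A^{1/2} & B^{1/2}\end{pmatrix}$ is positive semidefinite, and applying the standard norm estimate for the off-diagonal block of a positive $2\times 2$ block matrix. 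Squaring delivers the $\tfrac{1}{4}\|A+B\|^{2}$ constant.

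For part (ii), I would apply the Ando--Zhan inequality to the operator-monotone concave function $g(t)=t^{s}$ with $s=1/r\in(0,1]$, obtaining $|||(A+B)^{s}|||\le |||A^{s}+B^{s}|||$ for every unitarily invariant norm. After the substitution $A\mapsto A^{r}$, $B\mapsto B^{r}$ this becomes $|||(A^{r}+B^{r})^{1/r}|||\le |||A+B|||$; specializing to the spectral norm, where $\|X^{1/r}\|=\|X\|^{1/r}$ on positive matrices, and then raising to the $r$th power finishes the job. Part (iii) is purely formal: conjugating $A\le rB$ by the invertible positive matrix $B^{-1/2}$ gives $B^{-1/2}AB^{-1/2}\le rI$, which, since the left side is positive, is equivalent to $\|B^{-1/2}AB^{-1/2}\|\le r$. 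Writing $X=A^{1/2}B^{-1/2}$ and using $\|X^{*}X\|=\|X\|^{2}$ turns the latter into $\|A^{1/2}B^{-1/2}\|^{2}\le r$, and taking square roots yields the stated characterization.

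The only genuinely nontrivial step is the AM--GM-type bound in (i): a naive route through submultiplicativity and the triangle inequality loses a factor of two, because for positive matrices $\|A\|+\|B\|$ can exceed $\|A+B\|$. Extracting the sharp constant $\tfrac{1}{4}$ therefore requires the block-matrix argument rather than mere norm estimates, and I expect this to be the main technical obstacle; once that lemma is in hand, the rest amounts to bookkeeping.
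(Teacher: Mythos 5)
The paper itself does not prove Lemma \ref{lem22}: all three items are simply imported from the cited references (Bhatia--Kittaneh for (i), Ando--Zhan for (ii), and the congruence characterization recorded in Bakherad's paper for (iii)), so there is no in-paper argument to compare yours against. Your derivations of (ii) and (iii) are correct and are the standard ones: (ii) is exactly how the Ando--Zhan corollary follows from their theorem applied to the operator monotone function $t\mapsto t^{1/r}$ together with $\|X^{1/r}\|=\|X\|^{1/r}$ for $X\geq 0$, and (iii) is the usual congruence by $B^{-1/2}$ combined with $\|X^*X\|=\|X\|^2$.

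Part (i), however, has a genuine gap at its very first step. The claimed identity $\|AB\|=\|A^{1/2}B^{1/2}\|^{2}$ is false: similarity of $AB$ to the positive matrix $A^{1/2}BA^{1/2}$ preserves the spectrum but not the singular values, so what you actually obtain is $\|A^{1/2}B^{1/2}\|^{2}=\lambda_{1}(A^{1/2}BA^{1/2})=\rho(AB)\leq s_{1}(AB)=\|AB\|$, an inequality pointing the wrong way. Concretely, for $A=\mathrm{diag}(1,\varepsilon)$ and $B$ a small perturbation of the all-ones $2\times 2$ matrix, $\|AB\|\to\sqrt{2}$ while $\|A^{1/2}B^{1/2}\|^{2}\to 1$ as $\varepsilon\to 0$. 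Hence your argument, even if completed, would only bound the spectral radius of $AB$, which is strictly weaker than the operator-norm statement; controlling $s_1(AB)$ for the non-normal product $AB$ is precisely the nontrivial content of Bhatia--Kittaneh's Theorem 1 and requires a separate argument.

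A secondary issue is the block-matrix step. The inequality $\|A^{1/2}B^{1/2}\|\leq\tfrac{1}{2}\|A+B\|$ is true, but it is not a consequence of the positivity of the Gram matrix via ``the standard off-diagonal estimate'': for a general positive block matrix $\begin{bmatrix} P & X\\ X^* & Q\end{bmatrix}\geq 0$ one only gets $\|X\|^{2}\leq\|P\|\,\|Q\|$ (equivalently $\|X\|\le\max\{\|P\|,\|Q\|\}$), and the bound $\|X\|\leq\tfrac{1}{2}\|P+Q\|$ fails in general (take $P$, $Q$, $X$ to be the matrix units $E_{11}$, $E_{22}$, $E_{12}$, for which the block matrix is positive of rank one but $\|X\|=1>\tfrac12\|P+Q\|=\tfrac12$). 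A correct elementary route to $\rho(AB)\leq\tfrac14\|A+B\|^{2}$ is to normalize $\|A+B\|=2$ and use $A^{1/2}BA^{1/2}\leq A^{1/2}(2I_n-A)A^{1/2}=2A-A^{2}\leq I_n$; but, as noted above, this still leaves the passage from $\rho(AB)$ to $\|AB\|$ unaddressed, so item (i) remains unproved as written.
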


\begin{lemma} {\bf (\cite[Lemma 2.9]{MLinM2013})}\label{151}
Let $X\in \mathbb{M}_n$ and $r>0$.  Then,
\[\left| X \right|  \le r{I_n} \Leftrightarrow \left\| X \right\| \le r \Leftrightarrow \left[ {\begin{array}{*{20}{c}}
{r{I_n}}&X\\
{{X^*}}&{r{I_n}}
\end{array}} \right] \ge 0.\]
\end{lemma}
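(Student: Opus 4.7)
The plan is to establish the two equivalences separately: $|X|\le rI_n \Leftrightarrow \|X\|\le r$, and then $\|X\|\le r \Leftrightarrow \left[\begin{smallmatrix}rI_n & X\\ X^* & rI_n\end{smallmatrix}\right]\ge 0$.

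For the first equivalence, I would work from the spectral theorem applied to $|X|=(X^*X)^{1/2}$. Since $|X|$ is positive semidefinite Hermitian, the condition $|X|\le rI_n$ is equivalent to $\lambda_{\max}(|X|)\le r$. On the other hand, the eigenvalues of $|X|$ are precisely the singular values $s_j(X)$, and the operator norm is $\|X\|=s_1(X)=\lambda_{\max}(|X|)$. Hence $|X|\le rI_n \Leftrightarrow \|X\|\le r$.

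For the second equivalence, the natural tool is the Schur complement. Since $r>0$, the block $rI_n$ is positive definite and invertible, so the standard Schur complement criterion yields
\[
\begin{bmatrix} rI_n & X \\ X^* & rI_n \end{bmatrix}\ge 0 \;\Longleftrightarrow\; rI_n - X^*(rI_n)^{-1}X = rI_n - \tfrac{1}{r}X^*X \ge 0.
\]
Multiplying through by $r>0$ gives $X^*X\le r^2 I_n$, i.e.\ $|X|^2\le r^2 I_n$. By operator monotonicity of the square root on $[0,\infty)$ this is equivalent to $|X|\le rI_n$, which by the previous paragraph is the same as $\|X\|\le r$.

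I do not expect a substantial obstacle, since this is a textbook-level equivalence; the only points that require a little care are (i) correctly invoking the Schur complement with the invertible diagonal block $rI_n$, and (ii) using operator monotonicity of $t\mapsto t^{1/2}$ to move between $|X|^2\le r^2 I_n$ and $|X|\le rI_n$ rather than attempting a coordinate-wise argument. Once these two steps are stated cleanly, the three conditions line up in a short chain of iff's.
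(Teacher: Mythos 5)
Your proof is correct. Note that the paper does not actually prove this lemma --- it is quoted verbatim from Lin's paper (cited as Lemma 2.9 of \emph{On an operator Kantorovich inequality for positive linear maps}) and used as a black box --- so there is no in-paper argument to compare against; your chain (reduce $|X|\le rI_n$ to $\lambda_{\max}(|X|)=s_1(X)=\|X\|\le r$, then characterize positivity of the $2\times 2$ block matrix via the Schur complement $rI_n-\tfrac1r X^*X\ge 0$, i.e.\ $X^*X\le r^2I_n$) is the standard derivation and is exactly what one finds in the references. The only point worth stating explicitly is that the passage between $|X|^2\le r^2I_n$ and $|X|\le rI_n$ is a two-way equivalence: operator monotonicity of $t\mapsto t^{1/2}$ gives one direction, and the converse follows because both comparisons amount to the single scalar condition $\lambda_{\max}(|X|)\le r$, which you had already set up.
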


Throughout this paper, we use the famous Kantorovich constant $K(h) :=\dfrac{(h+1)^2}{4h}$ for $h >0$. See e.g., \cite{FHPS2005}.
\section{Singular value and determinant  inequalities}

We firstly review the Tan-Xie inequality for sector matrices $A,B \in \mathbb{M}_{n}$ and  $v \in [0,1]$ given in \cite[Theorem 2.4]{FupingTanAntaiXie}:
\begin{equation}\label{ineq01_TX}
\cos^{2}(\alpha) \Re (A!_vB) \leq \Re (A \sharp_{v} B) \leq  \sec^{2}(\alpha) \Re (A\nabla_v B),
\end{equation}
where $A!_vB =((1-v) A^{-1} + v B^{-1} )^{-1}$, $A \sharp_{v} B=\dfrac{\sin v\pi}{\pi}\int_0^{\infty}t^{v-1}\left(A^{-1}+tB^{-1}\right)^{-1}dt$, $A\nabla_v B =(1-v) A+ v B$  are  the weighted operator harmonic mean, geometric mean and arithmetic mean, respectively. 
The weighted geometric mean for accretive operators $A,B$ in the above was introduced in \cite[Definition 2.1]{MRAISSOULIMSMOSLEHIANSFURUICHI}
which coincides with $A^{1/2}\left(A^{-1/2}BA^{-1/2}\right)^vA^{1/2}$ when $A,B$ are strictly positive operators. It also becomes to $A\sharp B:=\dfrac{2}{\pi}\int_0^{\infty}\left(tA^{-1}+t^{-1}B^{-1}\right)^{-1}\frac{dt}{t}$ for $v=1/2$, which was introduced in \cite{Dru2015}. 
We use the symbols $!$, $\sharp$ and $\nabla$ instead of  $!_{1/2}$, $\sharp_{1/2}$ and $\nabla_{1/2}$ respectively, for simplicity.
The above double inequality \eqref{ineq01_TX} can be regarded as a generalization of the operator Young inequality:
$$
A!_vB \leq A\sharp_vB \leq A\nabla_vB, \quad (A,B \geq 0,\quad 0\le v \le 1). 
$$ 

From \eqref{ineq01_TX} we easily find that
\begin{equation}\label{ineq02_TX}
\Re (A+B)^{-1}\leq \frac{\sec^{4}(\alpha)}{4} \Re (A^{-1}+B^{-1})
\end{equation}
by putting $v=\frac{1}{2}$, $A^{-1}:=A$ and $B^{-1}:=B$. 

However, we can improve the inequality \eqref{ineq02_TX} in the following lemma.
\begin{lemma}\label{suggested_by_referee_lemma}
Let  $A,B \in \mathbb{M}_{n}$ with $W(A), W(B) \subset S_{\alpha}$, and $0\le v \le 1$. Then,
\begin{equation}\label{suggested_by_referee_ineq01}
\Re \left(A\nabla_v B\right)^{-1}\le \sec^2\left(\alpha\right)\Re\left(A^{-1}\nabla_vB^{-1}\right).
\end{equation}
\end{lemma}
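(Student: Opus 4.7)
The plan is to chain two applications of Lemma \ref{lemma1.3} together with the operator harmonic-arithmetic mean inequality. The key observation is that the sector $S_\alpha$ is a convex cone, so for $v \in [0,1]$ the matrix $A\nabla_v B = (1-v)A + vB$ satisfies $W(A\nabla_v B) \subset S_\alpha$ whenever $W(A), W(B) \subset S_\alpha$; in particular $A\nabla_v B$ is accretive with $\Re(A\nabla_v B) = (1-v)\Re A + v\Re B > 0$.

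First I would apply the first inequality of Lemma \ref{lemma1.3} to $A\nabla_v B$, giving
\[
\Re\bigl((A\nabla_v B)^{-1}\bigr) \le \bigl(\Re(A\nabla_v B)\bigr)^{-1} = \bigl((1-v)\Re A + v\Re B\bigr)^{-1}.
\]
Next I would invoke the operator convexity of $t \mapsto t^{-1}$ on strictly positive matrices (equivalently, the harmonic-arithmetic operator mean inequality) applied to $\Re A, \Re B > 0$, which yields
\[
\bigl((1-v)\Re A + v\Re B\bigr)^{-1} \le (1-v)(\Re A)^{-1} + v(\Re B)^{-1}.
\]
Finally I would apply the second inequality of Lemma \ref{lemma1.3} separately to $A$ and $B$ to get $(\Re A)^{-1} \le \sec^2(\alpha)\Re(A^{-1})$ and $(\Re B)^{-1} \le \sec^2(\alpha)\Re(B^{-1})$. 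Combining the three estimates produces
\[
\Re\bigl((A\nabla_v B)^{-1}\bigr) \le \sec^2(\alpha)\bigl((1-v)\Re(A^{-1}) + v\Re(B^{-1})\bigr) = \sec^2(\alpha)\,\Re(A^{-1}\nabla_v B^{-1}),
\]
which is \eqref{suggested_by_referee_ineq01}.

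I do not expect any real obstacle; the whole argument is a transparent concatenation. The only conceptual point is to recognize that the two directions of Lemma \ref{lemma1.3} are glued together by the operator convexity of the inverse, so that the intermediate quantity $\bigl(\Re(A\nabla_v B)\bigr)^{-1}$ can be exchanged for a convex combination of $(\Re A)^{-1}$ and $(\Re B)^{-1}$, which then absorbs into $\Re(A^{-1}\nabla_v B^{-1})$ at the cost of a single factor $\sec^2(\alpha)$. This is precisely why the bound improves the $\sec^4(\alpha)/4$ appearing in \eqref{ineq02_TX}.
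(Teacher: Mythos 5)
Your proof is correct and is essentially identical to the paper's: the same three-step chain of the first inequality of Lemma \ref{lemma1.3} applied to $A\nabla_v B$, operator convexity of $t^{-1}$, and the second inequality of Lemma \ref{lemma1.3} applied to $A$ and $B$. Your explicit remark that $S_\alpha$ is a convex cone (so $A\nabla_v B$ is accretive) is a point the paper leaves implicit, but otherwise there is no difference.
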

\begin{proof}
The calculations show that
\begin{eqnarray*}
\Re\left((1-v)A+vB\right)^{-1} &\le&\left(\Re\left((1-v)A+vB\right)\right)^{-1}\\
&=& \left((1-v)\Re A+v \Re B\right)^{-1}\\
&\le& (1-v)\Re^{-1} A+v \Re^{-1} B\\
&\le&\sec^2\left(\alpha\right)\left((1-v)\Re A^{-1}+v \Re B^{-1}\right)\\
&=&\sec^2\left(\alpha\right)\Re\left(1-v)A^{-1}+vB^{-1}\right).
\end{eqnarray*}
The first and the third inequality are due to Lemma \ref{lemma1.3}. The second inequality is due to the operator convexity of $t^{-1}$ on $(0,\infty)$. 
\end{proof}
Taking $v=\dfrac{1}{2}$ in \eqref{suggested_by_referee_ineq01}, we have
\begin{equation}\label{suggested_by_referee_ineq02}
\Re (A+B)^{-1}\leq \frac{\sec^{2}(\alpha)}{4} \Re (A^{-1}+B^{-1}),
\end{equation}
which improves  the inequality \eqref{ineq02_TX}. 
We use the inequality \eqref{suggested_by_referee_ineq02} to prove the  following Theorem
\ref{1111} and \ref{11191}.  
From the process of the proof in \cite[Theorem 2.4]{FupingTanAntaiXie}, we have for $A,B\in \mathbb{M}_n$ with $W(A),W(B) \subset S_{\alpha}$,
\begin{equation}\label{Nf1}
\Re (A \sharp_{v} B) \leq  \sec^{2}(\alpha) (\Re (A)\sharp_v\Re( B)).
\end{equation}
For the convenience to the readers, we give the proof of \eqref{Nf1}. Indeed we have
\begin{eqnarray*}
\Re(A\sharp_v B) &=& \frac{\sin v\pi}{\pi}\int_0^{\infty}t^{v-1}\Re^{-1}\left(A^{-1}+tB^{-1}\right)dt\\
&\le &\frac{\sin v\pi}{\pi}\int_0^{\infty}t^{v-1}\sec^2(\alpha)\left(\Re^{-1}(A)+t\Re^{-1}(B)\right)^{-1}dt\\
&=&\sec^2(\alpha) \Re(A)\sharp_v \Re(B).
\end{eqnarray*}
The above inequality can be proven by the use of  Lemma \ref{lemma1.3}. Actually, we have the following inequality from the second inequality in Lemma \ref{lemma1.3}:
$$
\Re \left(A^{-1}\right)+t \Re \left(B^{-1}\right) \ge \cos^2(\alpha)\left(\Re^{-1}(A)+t\Re^{-1}(B)\right),
$$
which implies
$$
\left( \Re \left(A^{-1}\right)+t \Re \left(B^{-1}\right)\right)^{-1}\leq  \sec^2(\alpha)\left(\Re^{-1}(A)+t\Re^{-1}(B)\right)^{-1}.
$$
Thus we reach to
$$
\Re^{-1}\left(A^{-1}+tB^{-1}\right)\leq  \sec^2(\alpha)\left(\Re^{-1}(A)+t\Re^{-1}(B)\right)^{-1},
$$
since for any $t \ge 0$
$$
\Re^{-1}\left(A^{-1}+tB^{-1}\right)=\left( \Re \left(A^{-1}\right)+t \Re \left(B^{-1}\right)\right)^{-1}.
$$

On  the  other  hand,  by  \cite[Corollary 3.1]{FY}, we have
\begin{equation*}
\Re (A)\sharp_v\Re( B)
 \leq   \Re (A)\nabla_v\Re( B) -2r_{\min}\left(\Re (A\nabla B)-\Re (A)\sharp\Re( B)\right),
\end{equation*}
 for $r_{\min} :=\min \left\{1-v,v\right\}$ with $v \in [0,1]$.
Thus, 
\begin{eqnarray}\label{Nf11}
\Re (A \sharp_{v} B) &\leq&  \sec^{2}(\alpha)   (\Re (A)\nabla_v\Re( B)) -2r_{\min}\sec^{2}(\alpha)\left(\Re (A\nabla B)-\Re (A)\sharp\Re( B)\right) \\
&\leq  & \sec^{2}(\alpha)   (\Re (A)\nabla_v\Re( B)),  \nonumber
\end{eqnarray}
which  shows  that   \eqref{Nf11} is  a  refinement  of  the second inequality of  \eqref{ineq01_TX}.
From now on, we study some singular value inequalities.
By a consequence of \eqref{suggested_by_referee_ineq01} with Lemma \ref{lemma1.1} and \ref{lemma1.2}, we also see the inequalities:
$$\prod_{j=1}^k s_j(A !_v B) \leq \sec^{2k}(\alpha)  \prod_{j=1}^k s_j( \Re (A !_v B)) \leq   \sec^{4k}(\alpha)\prod_{j=1}^k s_j( \Re (A\nabla_v B))  \leq    \sec^{4k}(\alpha)\prod_{j=1}^k s_j(  A\nabla_v B).
$$
We aim to obtain the singular value inequalities including the inverse of $A$, $B$ and $A+B$.
\begin{theorem}\label{1111}
Let  $A,B \in \mathbb{M}_{n}$ with $W(A), W(B) \subset S_{\alpha}$. Then we have,  for $k=1,\cdots,n$
\begin{eqnarray}
&& \prod^{k} _{j=1} s_{j}(A + B)^{-1}   \leq  
 \frac{\sec^{4k}(\alpha)}{4^k}   \prod^{k}_{j=1}  s_{j}(I_{n}+A^{-1}) 
\prod^{k}_{j=1}  s_{j}(I_{n}+B^{-1}), \label{f6}\\
&& \prod^{k} _{j=1} s_{j}(I_{n}+(A + B)^{-1})   \leq  
 \sec^{2k}(\alpha)  \prod^{k}_{j=1}  s_{j}\left(I_{n}+ \frac{ \sec^{2}(\alpha)}{4}A^{-1}\right) 
\prod^{k}_{j=1}  s_{j}\left(I_{n}+ \frac{ \sec^{2}(\alpha)}{4} B^{-1}\right) \label{f7}.
\end{eqnarray}
\end{theorem}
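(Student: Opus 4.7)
The plan is to reduce both inequalities \eqref{f6} and \eqref{f7} to the established positive semidefinite Garg--Aujla inequalities \eqref{ineq03} and \eqref{ineq04} by stripping off the non-Hermitian part of the sector matrices using Lemma \ref{lemma1.1}, Lemma \ref{lemma1.2}, and the improved inequality \eqref{suggested_by_referee_ineq02}. Throughout, note that since $W(A), W(B) \subset S_{\alpha}$ and $S_{\alpha}$ is convex, one has $W(A+B) \subset S_{\alpha}$; since $S_{\alpha}$ is also invariant under inversion, $A^{-1}, B^{-1}, (A+B)^{-1}$ are all sector matrices as well, so $\Re A^{-1}, \Re B^{-1}, \Re(A+B)^{-1}$ are positive definite.

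For \eqref{f6}, I would proceed in four steps. First, apply Lemma \ref{lemma1.2} to the sector matrix $(A+B)^{-1}$ to obtain
$$\prod_{j=1}^{k} s_j\bigl((A+B)^{-1}\bigr) \leq \sec^{2k}(\alpha) \prod_{j=1}^{k} s_j\bigl(\Re (A+B)^{-1}\bigr).$$
Second, apply \eqref{suggested_by_referee_ineq02}; since both sides are positive definite, Weyl monotonicity of eigenvalues yields
$$\prod_{j=1}^{k} s_j\bigl(\Re(A+B)^{-1}\bigr) \leq \frac{\sec^{2k}(\alpha)}{4^{k}} \prod_{j=1}^{k} s_j\bigl(\Re A^{-1} + \Re B^{-1}\bigr).$$
Third, apply the positive-case Garg--Aujla inequality \eqref{ineq03} to the positive definite matrices $\Re A^{-1}$ and $\Re B^{-1}$. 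Finally, use the identity $\Re(I_n + A^{-1}) = I_n + \Re A^{-1}$ together with Lemma \ref{lemma1.1} to replace $s_j(I_n + \Re A^{-1})$ by $s_j(I_n + A^{-1})$ (and analogously for $B$). Multiplying the constants gives the factor $\sec^{4k}(\alpha)/4^{k}$.

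For \eqref{f7}, the strategy is analogous but uses \eqref{ineq04} in place of \eqref{ineq03}. Apply Lemma \ref{lemma1.2} to the sector matrix $I_n + (A+B)^{-1}$, so its singular values are controlled by those of $I_n + \Re(A+B)^{-1}$ up to a factor $\sec^{2k}(\alpha)$. Then invoke \eqref{suggested_by_referee_ineq02} to bound $I_n + \Re(A+B)^{-1}$ above (in the operator order, hence in singular values by positivity) by $I_n + \frac{\sec^{2}(\alpha)}{4}(\Re A^{-1} + \Re B^{-1})$. Setting $A' := \frac{\sec^{2}(\alpha)}{4}\Re A^{-1} \geq 0$ and $B' := \frac{\sec^{2}(\alpha)}{4}\Re B^{-1} \geq 0$, inequality \eqref{ineq04} yields $\prod_{j=1}^{k} s_j(I_n + A' + B') \leq \prod_{j=1}^{k} s_j(I_n + A') \prod_{j=1}^{k} s_j(I_n + B')$. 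Finally, apply Lemma \ref{lemma1.1} once more to each of the two factors on the right, via $I_n + \frac{\sec^{2}(\alpha)}{4}\Re A^{-1} = \Re\bigl(I_n + \frac{\sec^{2}(\alpha)}{4}A^{-1}\bigr)$.

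The main obstacle is not any single hard step but rather bookkeeping: one must verify at each transition that the matrices involved are positive definite (so that $s_j = \lambda_j$ and the monotonicity rules apply without sign issues) and that the chain of inequalities composes in the right direction. All the tools—Lemmas \ref{lemma1.1}, \ref{lemma1.2}, \ref{lemma1.3}, the refined inequality \eqref{suggested_by_referee_ineq02}, and the positive-case inequalities \eqref{ineq03}--\eqref{ineq04}—are already in hand, so what remains is mostly careful composition of constants.
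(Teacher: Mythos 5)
Your proposal is correct and follows essentially the same route as the paper's own proof: Lemma \ref{lemma1.2} applied to the sector matrix $(A+B)^{-1}$ (resp.\ $I_n+(A+B)^{-1}$), then \eqref{suggested_by_referee_ineq02} with Weyl monotonicity, then \eqref{ineq03} (resp.\ \eqref{ineq04}) applied to $\Re A^{-1}$ and $\Re B^{-1}$, and finally Lemma \ref{lemma1.1} via the identity $I_n+\Re A^{-1}=\Re(I_n+A^{-1})$. The constants compose exactly as you describe, so there is nothing to add.
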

\begin{proof}
 Since  sum  of  two  sector  matrices  and  inverse  of  every  sector  matrix  are also sector,   $(A + B)^{-1}$  is a sector 
 matrix.    On  the  other  hand,  every  sector  matrix  is  an  accretive.  Thus we calculate  the  following  chain  of  inequalities:
\begin{eqnarray*}
&&  \prod^{k} _{j=1} s_{j}(A + B)^{-1}     \leq  \sec^{2k}(\alpha) 
 \prod^{k} _{j=1} s_{j} (\Re (A + B)^{-1}) \quad  \text {(by  Lemma \ref{lemma1.2})}  \\
&&  \leq \frac{\sec^{4k}(\alpha)}{4^k}  \prod^{k} _{j=1} s_{j} (\Re (A^{-1}+B^{-1}))  \quad \text {(by \eqref{suggested_by_referee_ineq02})}   \\
&&  = \frac{\sec^{4k}(\alpha)}{4^k} 
 \prod^{k} _{j=1} s_{j} (\Re (A^{-1}) +\Re( B^{-1}))   \\
&& \leq  \frac{\sec^{4k}(\alpha)}{4^k}   \prod^{k} _{j=1} s_{j} (I_{n}+ \Re (A^{-1}) )
 \prod^{k} _{j=1} s_{j} (I_{n}+ \Re (B^{-1}) )
\quad  \text {(by \eqref{ineq03}) } \\
&& = \frac{\sec^{4k}(\alpha)}{4^k}  \prod^{k} _{j=1} s_{j} ( \Re (I_{n}+ A^{-1}) )
 \prod^{k} _{j=1} s_{j} ( \Re (I_{n}+B^{-1}) ) \\
&&  \leq \frac{\sec^{4k}(\alpha)}{4^k}  \prod^{k} _{j=1} s_{j} (I_{n}+ A^{-1} )
 \prod^{k} _{j=1} s_{j} (I_{n}+B^{-1} ) \quad \text {(by  Lemma \ref{lemma1.1})}. 
\end{eqnarray*}
Similarly,  we  have  
\begin{eqnarray*}
 && \prod^{k} _{j=1} s_{j}(I_{n}+(A + B)^{-1})      \leq  
 \sec^{2k}(\alpha)  \prod^{k} _{j=1} s_{j} (\Re (I_{n}+(A + B)^{-1}))  \quad \text {(by Lemma \ref{lemma1.2})}  \\
&&  \leq    \sec^{2k}(\alpha)
 \prod^{k} _{j=1} s_{j} \left(I_{n}+ \frac{\sec^{2}(\alpha)}{4}\Re (A^{-1}+B^{-1})\right)   \quad \text {(by \eqref{suggested_by_referee_ineq02})}   \\
&& = \sec^{2k}(\alpha)  \prod^{k} _{j=1} s_{j} \left(I_{n}+ \frac{\sec^{2}(\alpha)}{4}\Re (A^{-1}) +\frac{\sec^{2}(\alpha)}{4}
\Re( B^{-1})\right)     \\
&& \leq  \sec^{2k}(\alpha) \prod^{k} _{j=1} s_{j} \left(I_{n}+  \frac{\sec^{2}(\alpha)}{4}\Re (A^{-1}) \right)
 \prod^{k} _{j=1} s_{j} \left(I_{n}+\frac{\sec^{2}(\alpha)}{4} \Re (B^{-1}) \right) \quad  \text {(by  \eqref{ineq04})}  \\
&& =\sec^{2k}(\alpha)  \prod^{k} _{j=1} s_{j} \left( \Re (I_{n}+ \frac{\sec^{2}(\alpha)}{4} A^{-1}) \right)
 \prod^{k} _{j=1} s_{j} \left( \Re (I_{n}+ \frac{\sec^{2}(\alpha)}{4} B^{-1}) \right) \\
&&  \leq  \sec^{2k}(\alpha) \prod^{k} _{j=1} s_{j} \left(I_{n}+  \frac{\sec^{2}(\alpha)}{4} A^{-1} \right)
 \prod^{k} _{j=1} s_{j} \left(I_{n}+ \frac{\sec^{2}(\alpha)}{4} B^{-1} \right)
\quad  \text {(by  Lemma \ref{lemma1.1})}. 
\end{eqnarray*}
\end{proof}

\begin{remark}
We may claim that Theorem \ref{1111} is a non-trivial result since the inequality \eqref{ineq04} is true whenever $f$ is an operator concave function. But  inequalities \eqref{f6} and \eqref{f7} with $\alpha =0$ are true, although the function $f(t)=t^{-1}$ for $t>0$ is not an operator concave. So we found the upper bound $\prod\limits^{k} _{j=1} s_{j}(I_{n}+(A + B)^{-1})$ without using \eqref{ineq02}.

We also note that we can obtain the  inequality \eqref{f6} for the special case $A,B >0$ from \eqref{ineq03} in the following.
Since $A!B \leq A\nabla B$,
$$
\prod_{j=1}^k 2s_j\left(A!B\right) \leq \prod_{j=1}^k 2s_j\left(A\nabla B\right) \leq \prod_{j=1}^n s_j(I_n+A) \prod_{j=1}^n s_j(I_n+B). 
$$
If we put $A:=A^{-1}$ and $B:=B^{-1}$, then we get \eqref{f6}  for  $\alpha=0$. 
\end{remark}
The  following  proposition has already been proven   in  \cite[Eq.(15)]{Slin}. We here give its proof for convenience to the readers with a slightly different proof.
\begin{proposition}{\bf (\cite{Slin})}\label{1122}
Let  $A,B \in \mathbb{M}_{n}$ with $W(A),W(B)\subset S_{\alpha}$. Then we have,  for $k=1,\cdots,n,$
\begin{eqnarray}
&& \prod^{k} _{j=1} s_{j}(A + B)   \leq  
    \prod^{k}_{j=1}  s_{j}(I_{n}+ \sec^{2}(\alpha) A) 
\prod^{k}_{j=1}  s_{j}(I_{n}+\sec^{2}(\alpha)  B) \label{f90}.
\end{eqnarray}
\end{proposition}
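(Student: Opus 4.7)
The plan is to imitate the chain-of-inequalities style used in the proof of Theorem \ref{1111}, but now applied to $A+B$ directly (rather than its inverse), thereby absorbing the factor $\sec^{2k}(\alpha)$ into the argument on the right-hand side instead of leaving it out front.

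First I would observe that $A+B$ is itself a sector matrix with the same angle $\alpha$: because $S_{\alpha}$ is a convex cone in $\mathbb{C}$ (for $z_1,z_2\in S_\alpha$, $\Re(z_1+z_2)>0$ and $|\Im(z_1+z_2)|\le |\Im z_1|+|\Im z_2|\le \tan(\alpha)\Re(z_1+z_2)$), the subadditivity $W(A+B)\subseteq W(A)+W(B)$ gives $W(A+B)\subset S_\alpha$. Then Lemma \ref{lemma1.2} applied to $A+B$ yields
\[
\prod_{j=1}^{k} s_j(A+B)\ \le\ \sec^{2k}(\alpha)\prod_{j=1}^{k}s_j(\Re(A+B))\ =\ \prod_{j=1}^{k}s_j\bigl(\sec^{2}(\alpha)\Re A+\sec^{2}(\alpha)\Re B\bigr),
\]
where the equality uses linearity of $\Re$ and homogeneity of singular values.

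Next, since $\sec^{2}(\alpha)\Re A$ and $\sec^{2}(\alpha)\Re B$ are positive definite, I would feed them into the Garg--Aujla inequality \eqref{ineq03} with $A:=\sec^{2}(\alpha)\Re A$, $B:=\sec^{2}(\alpha)\Re B$ to obtain
\[
\prod_{j=1}^{k}s_j\bigl(\sec^{2}(\alpha)\Re A+\sec^{2}(\alpha)\Re B\bigr)\ \le\ \prod_{j=1}^{k}s_j\bigl(I_n+\sec^{2}(\alpha)\Re A\bigr)\prod_{j=1}^{k}s_j\bigl(I_n+\sec^{2}(\alpha)\Re B\bigr).
\]

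Finally I would strip the $\Re$ off each factor by noting that $I_n+\sec^{2}(\alpha)\Re A=\Re(I_n+\sec^{2}(\alpha)A)$ (since $I_n$ is self-adjoint and $\Re$ is $\mathbb{R}$-linear), and then invoking Lemma \ref{lemma1.1}: $s_j(\Re X)\le s_j(X)$ for any $X\in\mathbb{M}_n$. Applying this with $X=I_n+\sec^{2}(\alpha)A$ and with $X=I_n+\sec^{2}(\alpha)B$ (both accretive, so the lemma applies) and taking products over $j=1,\dots,k$ delivers the stated bound \eqref{f90}.

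The only slightly nontrivial point is the observation in the first step that the $\sec^{2k}(\alpha)$ factor coming from Lemma \ref{lemma1.2} should not be left sitting in front, but instead pushed inside the real-part expression so that after applying \eqref{ineq03} the factor is already built into the $\sec^{2}(\alpha)A$ and $\sec^{2}(\alpha)B$ inside each $s_j(I_n+\cdot)$. Everything else is a mechanical concatenation of Lemmas \ref{lemma1.1}, \ref{lemma1.2} and inequality \eqref{ineq03}, exactly in the spirit of the proof of Theorem \ref{1111}.
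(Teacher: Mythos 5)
Your proof is correct and follows essentially the same route as the paper's: Lemma \ref{lemma1.2} to pass from $A+B$ to $\Re(A+B)$, the Garg--Aujla inequality \eqref{ineq03} applied to the positive definite matrices $\sec^{2}(\alpha)\Re(A)$ and $\sec^{2}(\alpha)\Re(B)$, and Lemma \ref{lemma1.1} to strip off the real parts. The only cosmetic difference is that you multiply the termwise inequalities $s_j(A+B)\le \sec^{2}(\alpha)s_j(\Re(A+B))$ directly, whereas the paper first converts them into the matrix inequality $|A+B|\le \sec^{2}(\alpha)U\Re(A+B)U^{*}$ for a unitary $U$ and then invokes unitary invariance; both steps are valid.
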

\begin{proof}
Note that  $W(A+B)\subset S_{\alpha}$.  By  Lemma \ref{lemma1.2},  we have
$$s_{j} (A + B)  \leq  \sec^{2}(\alpha) \lambda_{j}(\Re(A+B)).$$
This  means  that  (see e.g., \cite{DrurySLinM}) there exists a unitary $U$ such that,
$$|A + B| \leq \sec^{2}(\alpha) U\Re(A+B) U^{*}.$$  
Since a singular value is unitarily invariant, we thus have the follwoing,
\begin{eqnarray*}
&&  \prod^{k} _{j=1} s_{j}(A + B)   
  \leq \prod^{k} _{j=1} s_{j} (|A + B|)   \\
&&  \leq   \prod^{k} _{j=1} s_{j} \left( \sec^{2}(\alpha) U \Re (A+B)U^{*}\right)  \\
 && = \prod^{k} _{j=1}  s_{j}   \left( \sec^{2}(\alpha)U\Re (A)U^{*} + \sec^{2}(\alpha)
U\Re (B)U^{*}  ) \right) \quad  \\
&&  \leq   \prod^{k} _{j=1} s_{j} \left(I_{n}+ \sec^{2}(\alpha) \Re (A) \right) \prod^{k} _{j=1} s_{j}\left( I_{n}+\sec^{2}(\alpha) \Re (B)\right)  \quad \text {(by \eqref{ineq03})}   \\
&&  \leq   \prod^{k} _{j=1} s_{j} (I_{n}+ \sec^{2}(\alpha) A)
\prod^{k} _{j=1} s_{j}( I_{n}+\sec^{2}(\alpha)  B ). 
 \quad \text {(by Lemma \ref{lemma1.1})}   
\end{eqnarray*}
\end{proof}
Next, we study some determinant  inequalities in the rest of this section.
On the determinant inequality, the following is well known \cite[Theorem 7.7]{Zhang_book}:
\begin{equation}\label{ineq01_proof_prop2.1}
\det(A+B) \geq \det A +\det B,\,\,(A,B\geq 0).
\end{equation} 
With this, we have the following inequality for sector  matrices $A$ and $B$.
\begin{eqnarray}\label{2233}
&&  | \det(A + B)|   \geq  
\det (\Re (A + B)) \quad (\text {by   Lemma  \ref{lemma1.1}})  \nonumber \\
&&=\det (\Re (A )+ \Re(B))  \geq   \det (\Re (A))+\det (\Re( B)) \quad 
 (\text {by \eqref{ineq01_proof_prop2.1}} ) \nonumber 
\\ 
&& \geq   \cos^{n}(\alpha) \left(|\det (A)| +|\det (B)| \right)  \quad (\text {by  Lemma \ref{lemma1.2}}).
\end{eqnarray}
If  $A,B \geq 0,$  that  is, $\alpha=0,$  then   \eqref{2233}  becomes  
\eqref{ineq01_proof_prop2.1}.  Also,     \eqref{2233}  is  a   reverse  of \cite[Eq.(13)]{CYangFLu}. Of course, \eqref{2233} is trivial for $A,B\geq 0$ since $\cos(\alpha) \leq 1$ for $\alpha \in \left[0,\frac{\pi}{2}\right)$.

For further inequalities on determinant, we give the following remark.
\begin{remark}
\begin{itemize}
\item[(i)]
For $A,B \in \mathbb{M}_{n}$ with $W(A),W(B)\subset S_{\alpha}$, we have  
\begin{eqnarray}\label{2244}
&& |\det (A)| ! |\det (B)| \leq  
\sec^{n}(\alpha)    (\det (\Re (A)) ! \det (\Re( B))) \quad  
(\text{by Lemma \ref{lemma1.2}})  \nonumber \\
&&\leq  
\sec^{n}(\alpha) (\det (\Re (A))  \nabla \det (\Re( B)))  \leq  
\sec^{n}(\alpha)  (|\det (A)| \nabla |\det (B)|) 
    \quad(\text {by    Lemma  \ref{lemma1.1}} ) 
\end{eqnarray}
\item[(ii)]
For $A,B \in \mathbb{M}_{n}$ with $W(A),W(B)\subset S_{\alpha}$ such that  $0< mI_n \leq \Re(A), \Re(B) \leq MI_n$, we have      
\begin{eqnarray}\label{2255}
&& |\det (A)| ! |\det (B)| \geq  \det (\Re (A)) ! \det (\Re( B)) 
    \quad (\text {by   Lemma  \ref{lemma1.1}} ) \nonumber \\
&&\geq   K^{-2}(h) (\det (\Re (A))  \nabla \det (\Re( B)))   \nonumber \\
&&\geq  K^{-2}(h)\cos^{n}(\alpha)  (|\det (A)| \nabla |\det (B)|)
  \quad  (\text {by Lemma \ref{lemma1.2}} ). 
\end{eqnarray}
In the second inequality, we used the scalar inequality $a\nabla b \leq K^2(h) a!b$ for $0<m \leq a,b \leq M$ with $h:=M/m$.
\end{itemize}
\end{remark}

We here aim to obtain the determinant  inequalities including the inverse of $A$, $B$ and $A+B$ as  shown in Theorem \ref{1111}.
 
\begin{theorem}\label{11191}
Let  $A,B \in \mathbb{M}_{n}$ with $W(A),W(B)\subset S_{\alpha}$. Then,
\begin{equation}\label{f10}
| \det(A + B)^{-1}|   \leq    \frac {\sec^{3n}(\alpha)}{4^n} |\det(I_{n}+A^{-1})| 
\cdot|\det(I_{n}+B^{-1})|. 
\end{equation}
\begin{equation}\label{f212}
 | \det(I_{n}+(A + B)^{-1})|  \leq  
\sec^{n}(\alpha)  \left|\det \left(I_{n}+ \frac{\sec^{2}(\alpha)}{4}A^{-1} \right)\right|\cdot\left|\det \left(I_{n}+\frac{\sec^{2}(\alpha)}{4}B^{-1} \right)\right|. 
\end{equation}
\end{theorem}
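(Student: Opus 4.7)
The plan is to mirror the chain-of-inequalities proof used in Theorem \ref{1111}, replacing the singular value products $\prod_{j=1}^{k} s_{j}(\cdot)$ by absolute values of determinants (which corresponds essentially to the $k=n$ case since $\prod_{j=1}^{n} s_{j}(X) = |\det X|$). The four ingredients are the same: Lemma \ref{lemma1.2} to pass from $|\det(\cdot)|$ to $\det \Re(\cdot)$, the improved inequality \eqref{suggested_by_referee_ineq02} combined with monotonicity of $\det$ on the positive cone to expose $\Re A^{-1}+\Re B^{-1}$, the Garg--Aujla inequality \eqref{ineq03} or \eqref{ineq04} at $k=n$ to split the sum into a product, and Lemma \ref{lemma1.1} to go back from $\det \Re(\cdot)$ to $|\det(\cdot)|$.

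For \eqref{f10}, I would first observe that sums and inverses preserve the sector $S_{\alpha}$, so that $W((A+B)^{-1})\subset S_{\alpha}$ and Lemma \ref{lemma1.2} yields $|\det(A+B)^{-1}| \le \sec^{n}(\alpha)\det \Re((A+B)^{-1})$. Then I apply \eqref{suggested_by_referee_ineq02}; since both sides are positive definite, taking determinants gives the factor $\sec^{2n}(\alpha)/4^{n}$ and produces $\det(\Re A^{-1}+\Re B^{-1})$. Applying \eqref{ineq03} with $k=n$ bounds this by $\det(I_{n}+\Re A^{-1})\det(I_{n}+\Re B^{-1})$, which equals $\det\Re(I_{n}+A^{-1})\det\Re(I_{n}+B^{-1})$. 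Finally Lemma \ref{lemma1.1}, applied to the accretive matrices $I_{n}+A^{-1}$ and $I_{n}+B^{-1}$, converts each factor to $|\det(I_{n}+A^{-1})|$ and $|\det(I_{n}+B^{-1})|$, and the product of prefactors $\sec^{n}(\alpha)\cdot \sec^{2n}(\alpha)/4^{n}$ gives the stated $\sec^{3n}(\alpha)/4^{n}$.

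For \eqref{f212}, the argument is analogous but uses \eqref{ineq04} in place of \eqref{ineq03}. The one check needed is that $I_{n}+(A+B)^{-1}$ is still a sector matrix with the same $\alpha$, which follows because translation by a positive real leaves $|\Im z|$ fixed while enlarging $\Re z$, so the condition $|\Im z|\le \tan(\alpha)\Re z$ is preserved. Then Lemma \ref{lemma1.2} gives $|\det(I_{n}+(A+B)^{-1})|\le \sec^{n}(\alpha)\det(I_{n}+\Re(A+B)^{-1})$, the inequality \eqref{suggested_by_referee_ineq02} combined with $\det$-monotonicity bounds this by $\det(I_{n}+\tfrac{\sec^{2}(\alpha)}{4}(\Re A^{-1}+\Re B^{-1}))$, and \eqref{ineq04} at $k=n$ splits the sum into $\det(I_{n}+\tfrac{\sec^{2}(\alpha)}{4}\Re A^{-1})\det(I_{n}+\tfrac{\sec^{2}(\alpha)}{4}\Re B^{-1})$; Lemma \ref{lemma1.1} then converts each factor to an absolute value of a determinant, yielding the claim.

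There is no real obstacle in the main computation; the whole proof is a straightforward transcription of Theorem \ref{1111}'s proof. The only subtle bookkeeping is (i) carefully tracking which matrices are accretive so that Lemma \ref{lemma1.1} applies (the key such matrices are $I_{n}+A^{-1}$, $I_{n}+B^{-1}$, $I_{n}+\tfrac{\sec^{2}(\alpha)}{4}A^{-1}$, $I_{n}+\tfrac{\sec^{2}(\alpha)}{4}B^{-1}$, all clearly accretive), and (ii) confirming that translation by $I_{n}$ preserves the sector property so Lemma \ref{lemma1.2} can be used on $I_{n}+(A+B)^{-1}$. Neither is deep, so I expect the proof to run almost line-for-line parallel to that of Theorem \ref{1111}.
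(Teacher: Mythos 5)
Your proposal is correct and follows essentially the same route as the paper: Lemma \ref{lemma1.2} applied to the sector matrices $(A+B)^{-1}$ and $I_n+(A+B)^{-1}$, then \eqref{suggested_by_referee_ineq02} with determinant monotonicity, then \eqref{ineq03} (resp. \eqref{ineq04}) at $k=n$, and finally Lemma \ref{lemma1.1} to return to absolute values of determinants. The only difference is cosmetic: you make explicit the checks (translation by $I_n$ preserving the sector, accretivity of $I_n+A^{-1}$ etc.) that the paper leaves implicit.
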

\begin{proof}
The following direct calculations imply the results, since $ (A+B)^{-1} $ and $A^{-1}+B^{-1}$ are sector.
\begin{eqnarray*}
&&  | \det(A + B)^{-1}|   \leq  
 \sec^{n}(\alpha) \det (\Re (A + B)^{-1}) \quad (\text {by  Lemma \ref{lemma1.2}})  \\
  &&  \leq  \frac {\sec^{3n}(\alpha)}{4^n} \det (\Re (A^{-1}+B^{-1})) \quad \text {(by  \eqref{suggested_by_referee_ineq02} )}  \\
 && =  \frac {\sec^{3n}(\alpha)}{4^n} \det (\Re (A^{-1}) +\Re( B^{-1}))   \\
&& \leq  \frac {\sec^{3n}(\alpha)}{4^n}   \det(I_{n}+ \Re (A^{-1}) )
 \det(I_{n}+ \Re (B^{-1}) ) \quad\text {(by $k=n$  in \eqref{ineq03})} \\
&& = \frac {\sec^{3n}(\alpha)}{4^n}\det ( \Re (I_{n}+ A^{-1}) )\det( \Re (I_{n}+B^{-1}) ) \\
 &&  \leq  \frac {\sec^{3n}(\alpha)}{4^n} |\det (I_{n}+ A^{-1} )||\det (I_{n}+B^{-1} )| \quad 
 \text {(by Lemma  \ref{lemma1.1})}. 
\end{eqnarray*}
Similarly 
\begin{eqnarray*}
 && |\det (I_{n}+(A + B)^{-1})| \leq \sec^{n}(\alpha) 
\det (\Re (I_{n}+(A + B)^{-1}))  \quad  (\text {by Lemma \ref{lemma1.2}})  \\
&& =
 \sec^{n}(\alpha) \det (I_{n}+\Re (A + B)^{-1})  \quad \\
 &&  \leq   \sec^{n}(\alpha) \det \left(I_{n}+ \frac {\sec^{2}(\alpha)}{4}\Re \left(A^{-1}+B^{-1}\right)\right)   \quad\text {(by  \eqref{suggested_by_referee_ineq02}) } \\
&&  =  {\sec^{n}(\alpha)}
\det\left(  I_{n}+ \frac{\sec^{2}(\alpha)}{4}\Re (A^{-1}) + \frac{\sec^{2}(\alpha)}{4} \Re( B^{-1})\right)      \\
&& \leq   {\sec^{n}(\alpha)}\det \left( I_{n}+ \frac{\sec^{2}(\alpha)}{4} \Re (A^{-1}) \right)
\det  \left( I_{n}+ \frac{\sec^{2}(\alpha)}{4}\Re (B^{-1}) \right)\quad 
\text {(by $k=n$  in \eqref{ineq04})} \\
&& =  \sec^{n}(\alpha)\det \left( \Re \left(I_{n}+ \frac{\sec^{2}(\alpha)}{4} A^{-1}\right) \right)\det \left( \Re \left(I_{n}+\frac{\sec^{2}(\alpha)}{4}B^{-1}\right) \right)  \\
 &&  \leq  \sec^{n}(\alpha)  \left|\det \left(I_{n}+ \frac{\sec^{2}(\alpha)}{4}A^{-1} \right)\right|\cdot\left|\det \left(I_{n}+\frac{\sec^{2}(\alpha)}{4}B^{-1} \right)\right|\quad\text {(by  Lemma  \ref{lemma1.1})}. 
\end{eqnarray*}
\end{proof}
\begin{remark}
Under the special assumption such that $A,B>0$, the inequalities \eqref{f10} and \eqref{f212}
are trivially deriven from the inequalities \eqref{ineq03} and \eqref{ineq04} with $k=n$, respectively. 
Indeed, from \eqref{ineq03} and  $A!B \leq A\nabla B$, we have
$$
2^n \det\left( A!B\right)
\leq 2^n \det\left( A\nabla B\right)\leq \det\left(I_n +A\right)\cdot \det\left(I_n +B\right).
$$
By putting $A:=A^{-1}$, $B:=B^{-1}$ in the above inequality, we have
$$
2^n  \det\left(A^{-1}!B^{-1}  \right) \leq   \det\left(I_n +A^{-1}\right) \cdot \det\left(I_n +B^{-1}\right), 
$$
which is equivalent to the inequality \eqref{f10}  for  $\alpha=0$, taking an absolute value in both sides.

Similarly, we have 
$$
\det\left(I_n+2A!B\right) \leq \det\left(I_n+2A\nabla B\right) \leq \det\left(I_n+A\right) \cdot \det\left(I_n+B\right)
$$
from  \eqref{ineq04}, and $A!B \leq A\nabla B$.
By putting $A:=\frac{1}{4}A^{-1}$, $B:=\frac{1}{4}B^{-1}$ above, we have
$$
\det\left(I_n + (A+B)^{-1} \right) \leq  \det\left(I_n+\frac{1}{4} A^{-1}\right) \cdot \det\left(I_n+\frac{1}{4} B^{-1}\right)
$$
which is equivalent to the inequality \eqref{f212}  for  $\alpha=0$, taking an absolute value in both sides.

However, we have to state that the above derivations are true for the case $A,B>0$ and would like to emphasize that Theorem \ref{11191} is valid for sector  matrices $A,B$ which are more general condition than $A,B >0$. 
\end{remark}

It is quite natural to consider the lower bound. We give a result for this question.
\begin{proposition}\label{prop_2.2}
Let  $A,B \in \mathbb{M}_{n}$ with $W(A),W(B)\subset S_{\alpha}$.  If  we have 
$0<  mI_n\Re(A^{-1}) \leq \Re(B^{-1}) \leq  MI_n \Re(A^{-1})$,  then  
\begin{equation}\label{ineq01_prop2.2}
\left|\det (A!B) \right| \geq  \frac{\cos^{3n}(\alpha) \kappa^{-n}}{2^n} \left( \left|\det A\right| + \left|\det B\right| \right),
\end{equation}
where $\kappa:=\max\{K^2(m),K^2(M)\}$ and $K(x):=\dfrac{(x+1)^2}{4x}$ for $x>0$.
\end{proposition}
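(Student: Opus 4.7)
The plan is to mirror the chain of inequalities that proves Theorem \ref{11191}, but to run every step in the reverse direction so as to extract a lower bound on $|\det(A!B)|$, and to fold in the Kantorovich hypothesis at the end. The backbone is the identity $A!B=2(A^{-1}+B^{-1})^{-1}$, together with the fact that sector matrices are closed under sums, inverses, and hence under the operator harmonic mean.

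First I would observe that $C:=A^{-1}+B^{-1}$ is sector, and so is $A!B=2C^{-1}$; being accretive, Lemma \ref{lemma1.1} gives $|\det(A!B)|\ge \det\Re(A!B)$. Next, applying the second half of Lemma \ref{lemma1.3} to $C$ yields $\Re(C^{-1})\ge \cos^{2}(\alpha)\Re^{-1}(C)$; setting $P:=\Re(A^{-1})$ and $Q:=\Re(B^{-1})$ this rewrites as
\[
\Re(A!B)\ge 2\cos^{2}(\alpha)(P+Q)^{-1}=\cos^{2}(\alpha)(P\nabla Q)^{-1},
\]
and taking determinants gives $\det\Re(A!B)\ge \cos^{2n}(\alpha)/\det(P\nabla Q)$.

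The hypothesis $mP\le Q\le MP$ means that the eigenvalues $\lambda_{j}$ of $Z:=P^{-1/2}QP^{-1/2}$ lie in $[m,M]$. Since $K$ is unimodal, $K(\lambda_{j})\le \max\{K(m),K(M)\}=\kappa^{1/2}$, or equivalently $(1+\lambda_{j})^{2}\le 4\kappa^{1/2}\lambda_{j}$. Passing to products over $j$ and combining with AM--GM converts this into a Kantorovich-type upper bound on $\det(P\nabla Q)$ in terms of $\det P+\det Q$ with an appropriate power of $\kappa$. Finally, Lemma \ref{lemma1.2} applied to the sector matrices $A^{-1}$ and $B^{-1}$ gives $\det P\ge \cos^{n}(\alpha)/|\det A|$ and $\det Q\ge \cos^{n}(\alpha)/|\det B|$; substituting these and using that the Kantorovich condition makes $|\det A|$ and $|\det B|$ comparable, one assembles the three factors $\cos^{2n}(\alpha)$ (from Step 2), $\cos^{n}(\alpha)$ (from Lemma \ref{lemma1.2}), and $\kappa^{-n}/2^{n}$ (from Kantorovich) into the advertised lower bound.

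The main obstacle is the Kantorovich step: a reverse Minkowski inequality of the form $\det(P\nabla Q)\le c\,(\det P+\det Q)$ fails in general, so the condition $mP\le Q\le MP$ must be exploited at the eigenvalue level to quantify how much $\det(P\nabla Q)$ can exceed $(\det P+\det Q)/2$, and careful bookkeeping is required to match the exponent of $\kappa^{-1}$ in the final expression to the stated $\kappa^{-n}$. An alternative route through $\det(\Re A!\Re B)$ and a Kantorovich bound on $\Re A,\Re B$ directly is possible via Lemma \ref{lemma1.3}, but would introduce $\sec^{2}(\alpha)$-distorted bounds $K(m'),K(M')$ in place of $K(m),K(M)$ and therefore does not yield the cleaner $\kappa$ of the statement; this is why the argument is kept on the level of $P$ and $Q$.
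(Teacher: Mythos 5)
Your opening two steps coincide with the paper's: $A!B=2(A^{-1}+B^{-1})^{-1}$ is sector, Lemma \ref{lemma1.1} gives $|\det(A!B)|\ge\det\Re(A!B)$, and the second inequality of Lemma \ref{lemma1.3} applied to $C=A^{-1}+B^{-1}$ gives $\Re(A!B)\ge 2\cos^2(\alpha)(P+Q)^{-1}$ with $P=\Re(A^{-1})$, $Q=\Re(B^{-1})$. The genuine gap is in what you do next: you pass to determinants immediately, arriving at $\cos^{2n}(\alpha)/\det(P\nabla Q)$, and from there the target $|\det A|+|\det B|$ is not reachable. Even granting your Kantorovich step $\det(P\nabla Q)\le c\,(\det P+\det Q)$ (which is indeed provable from the eigenvalues of $P^{-1/2}QP^{-1/2}$), you are left with a lower bound of the form $c^{-1}\cos^{2n}(\alpha)/(\det P+\det Q)$. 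Since $\det P$ and $\det Q$ now sit in a \emph{denominator}, the bounds $\det P\ge\cos^n(\alpha)/|\det A|$ that you propose to substitute go in the wrong direction (a lower bound on the denominator only gives an \emph{upper} bound on the fraction); the usable bounds are $\det P\le|\det A^{-1}|=1/|\det A|$ from Lemma \ref{lemma1.1}, and these yield $1/(\det P+\det Q)\ge|\det A|\,|\det B|/(|\det A|+|\det B|)$ --- a harmonic-mean expression, not the sum. Converting that into $|\det A|+|\det B|$ requires a further scalar reverse AM--GM step whose constant depends on the ratio $|\det A|/|\det B|$ and picks up extra powers of $m$, $M$ and $\sec(\alpha)$; it does not reduce to the stated $\kappa^{-n}$. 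So the "careful bookkeeping" you defer is not bookkeeping: the route produces the wrong functional form.

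The missing idea is to exploit the Kantorovich hypothesis at the \emph{operator} level before taking any determinant. From $K(x)\ge1$ one has $\frac{1+x}{2}\le\kappa\,\frac{2x}{1+x}$ for $x\in[m,M]$, and functional calculus on $P^{-1/2}QP^{-1/2}$ (then conjugation by $P^{1/2}$) gives $P\nabla Q\le\kappa\,P!Q$, equivalently
\begin{equation*}
(P+Q)^{-1}\;\ge\;\frac{\kappa^{-1}}{4}\left(P^{-1}+Q^{-1}\right).
\end{equation*}
Now the \emph{first} inequality of Lemma \ref{lemma1.3}, applied to the sector matrices $A^{-1}$ and $B^{-1}$, gives $P^{-1}=\Re^{-1}(A^{-1})\ge\Re(A)$ and $Q^{-1}\ge\Re(B)$, so one faces $\det\left(\Re(A)+\Re(B)\right)$, to which the superadditivity $\det(X+Y)\ge\det X+\det Y$ for $X,Y\ge0$ applies; Lemma \ref{lemma1.2} then converts $\det\Re(A)+\det\Re(B)$ into $\cos^n(\alpha)\left(|\det A|+|\det B|\right)$. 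Assembling $2^n\cos^{2n}(\alpha)\cdot\frac{\kappa^{-n}}{4^n}\cdot\cos^{n}(\alpha)$ gives exactly $\frac{\cos^{3n}(\alpha)\kappa^{-n}}{2^n}$. In short: the sum $|\det A|+|\det B|$ must come from Minkowski's determinant inequality applied to $P^{-1}+Q^{-1}$, which is only visible if the reverse harmonic--arithmetic mean inequality is kept as a matrix inequality rather than collapsed to determinants at the stage you chose.
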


\begin{proof}
Since $K(x)\ge 1$ for $x>0$ we have the scalar inequality $\dfrac{1+x}{2} \leq K^2(x) \dfrac{2x}{x+1}$ for $x>0$. By the standard functional calculus, we have
\begin{equation}\label{ineq02_prop2.2}
\Re (A^{-1}) \nabla \Re(B^{-1}) \leq \kappa\,\, \Re (A^{-1}) ! \Re(B^{-1}),
\end{equation}
under the assumption $0<  mI_n \leq \Re(A^{-1})^{-1/2}  \Re(B^{-1}) \Re(A^{-1})^{-1/2} \leq  MI_n$. 
The inequality \eqref{ineq02_prop2.2} implies
\begin{equation}\label{ineq03_prop2.2}
\Re ^{-1}(A+B) \geq \frac{\kappa^{-1}}{4} \left(\Re^{-1} (A)+  \Re^{-1}(B)\right),
\end{equation}
putting $A^{-1}=:A$ and $B^{-1}=:B$.
Thus we have the following calculations.
\begin{eqnarray*}
&&\hspace*{-1cm}  | \det(A + B)^{-1}|   \geq  
\det (\Re (A + B)^{-1}) \quad (\text {by   Lemma  \ref{lemma1.1}})  \nonumber  \\
&&\hspace*{-1cm}   \geq \cos^{2n}(\alpha) \det (\Re^{-1} (A + B))\quad (\text {by  Lemma \ref{lemma1.3}})  \nonumber \\
&&\hspace*{-1cm}   \geq  \frac {\cos^{2n}(\alpha) \kappa^{-n}}{4^n} \det (\Re^{-1} (A) + \Re^{-1} (B))\quad (\text {by  \eqref{ineq03_prop2.2}})  \nonumber\\
 &&\hspace*{-1cm} \ge  \frac {\cos^{2n}(\alpha) \kappa^{-n}}{4^n} \det (\Re (A^{-1}) +\Re( B^{-1})) \quad (\text {by  Lemma \ref{lemma1.3}})  \nonumber  \\
&&\hspace*{-1cm} \geq   \frac {\cos^{2n}(\alpha) \kappa^{-n}}{4^n} \left(\det (\Re (A^{-1})) +\det (\Re( B^{-1})) \right) \quad 
 (\text {by \eqref{ineq01_proof_prop2.1}} )
\nonumber \\ 
&&\hspace*{-1cm} \geq    \frac {\cos^{3n}(\alpha) \kappa^{-n}}{4^n} \left(|\det (A^{-1})| +|\det (B^{-1})| \right),  \,\, (\text {by  Lemma \ref{lemma1.2}}) 
\end{eqnarray*}
which implies \eqref{ineq01_prop2.2} by putting $A^{-1}:=A$ and $B^{-1}:=B$.
\end{proof}

Closing this section, we give a few comments on our results, Theorem \ref{1111}.
For the special case $\alpha =0$ in Theorem \ref{1111}, then we have $A,B>0$. Then two inequalities \eqref{f6} and \eqref{f7} give  upper bounds for any $k=1,2,\cdots,n$, respectively,
\begin{eqnarray*}
 \prod^{k} _{j=1} \lambda_{j}(A + B)^{-1}   \leq  
  \prod^{k}_{j=1}  \lambda_{j}\left(\frac{I_{n}+A^{-1}}{2}\right) 
\prod^{k}_{j=1}  \lambda_{j}\left(\frac{I_{n}+B^{-1}}{2}\right)
\leq  
  \prod^{k}_{j=1}  \lambda_{j}\left(I_{n}+A^{-1}\right) 
\prod^{k}_{j=1}  \lambda_{j}\left(I_{n}+B^{-1}\right)
\end{eqnarray*}
and
\begin{eqnarray*}
\prod^{k} _{j=1} \lambda_{j}(I_{n}+(A + B)^{-1})   &\leq&  
   \prod^{k}_{j=1}  \lambda_{j}\left(I_{n}+ \frac{1}{4}A^{-1}\right) 
\prod^{k}_{j=1}  \lambda_{j}\left(I_{n}+ \frac{1}{4} B^{-1}\right)\\
&\leq&  
   \prod^{k}_{j=1}  \lambda_{j}\left(I_{n}+ A^{-1}\right) 
\prod^{k}_{j=1}  \lambda_{j}\left(I_{n}+ B^{-1}\right),
 \end{eqnarray*}
since $A^{-1},B^{-1}>0$ and $(A+B)^{-1}>0$.

Therefore, it is of interest to consider the following singular value inequalities hold or not for any non-singular $A,B,A+B\in\mathbb{M}_n$ and any $k=1,\cdots,n$,
$$
\prod_{j=1}^ks_j(A+B)^{-1} \leq \prod_{j=1}^ks_j(I_n+A^{-1})\prod_{j=1}^ks_j(I_n+B^{-1})
$$
and
$$
\prod_{j=1}^ks_j\left(I_n+(A+B)^{-1}\right) \leq \prod_{j=1}^ks_j(I_n+A^{-1})\prod_{j=1}^ks_j(I_n+B^{-1}).
$$
However, the above inequalities do not hold in general. We give counter-examples. Firstly take $k=1$ and 
\[A: = \left( {\begin{array}{*{20}{c}}
1&{ - 1}&1\\
{ - 1}&1&3\\
1&3&{20}
\end{array}} \right),B: = \left( {\begin{array}{*{20}{c}}
{100}&2&{ - 3}\\
2&1&4\\
{ - 3}&4&1
\end{array}} \right).\]
By the numerical computations, we have
$$
s_1(A+B)^{-1}\simeq 3.07774,\,\, s_1\left(I_3+(A+B)^{-1}\right)\simeq 2.07774, \,\, s_1(I_3+A^{-1})s_1(I_3+B^{-1}) \simeq 1.82851.
$$
Thus the following norm inequality does not hold in general
$$
\min \left\{||(A+B)^{-1} ||,||I_n+(A+B)^{-1} || \right\} \leq ||I_n+A^{-1} ||\cdot || I_n+B^{-1}||
$$
for any non-singular hermitian $A,B,A+B\in\mathbb{M}_n$.

Secondly we can show that the following determinantal inequality: 
$$
\min\left\{|\det((A+B)^{-1})|,|\det(I_n+(A+B)^{-1})|\right\} \leq |\det(I_n+A^{-1})|\cdot |\det(I_n+B^{-1})|
$$
also does not hold in general for any non-singular hermitian $A,B,A+B\in\mathbb{M}_n$. Indeed, we take a counter-example for the above inequality as
\[A: = \left( {\begin{array}{*{20}{c}}
1&{ - 1}&{2.5}\\
{ - 1}&2&{ - 2}\\
{2.5}&{ - 2}&1
\end{array}} \right),B: = \left( {\begin{array}{*{20}{c}}
{ - 1}&1&{ - 3}\\
1&{ - 1}&1\\
{ - 3}&1&{ - 1}
\end{array}} \right).\]
Then we have
$$
|\det((A+B)^{-1})| =4,\,\,|\det(I_3+(A+B)^{-1})|=2,\,\, |\det(I_3+A^{-1})|\cdot |\det(I_3+B^{-1})| \simeq 1.84091
$$
by the numerical computations.

%
%

\section{Matrix inequalities for positive  multilinear maps}
In the paper \cite{LinMFSun},  the  authors  obtained  the  following result  for  two accretive  operators  $A,B$  on a Hilbert space:
$$ \Re(A) \sharp \Re (B) \leq \Re(A \sharp B).  $$
The  authors  extended  the  above  inequality  as  follows \cite{MRAISSOULIMSMOSLEHIANSFURUICHI}:
\begin{equation}\label{R1}
\Re(A) \sharp_{v} \Re (B) \leq \Re(A \sharp_{v} B),
\end{equation}
where  $0  \leq v \leq 1.$

A linear map $\Phi: \mathbb{M}_n \to \mathbb{M}_l$ is said to be a positive if $\Phi(A) \geq 0$ whenever $A \geq 0$ and $\Phi$ is called a normalized if $\Phi(I_n) = I_l$.

$A\leq  B \Rightarrow A^{2}\leq B^{2}$  is not  true in general. However we have
the following useful fact.
\begin{lemma}{\bf (\cite[Theorem 6]{FINS}, \cite[Proposition 2.4]{MLinM2013})}\label{sec3_Lin_lemma}
If $A, B\in \mathbb{M}_n$ satisfy $0 \leq A\leq  B $ and $0<mI_n \leq A \leq MI_n$ with $h:=M/m$, then we have $A^{2}\leq  K(h) B^{2}.$
\end{lemma}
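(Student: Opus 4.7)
The plan is to reduce the desired operator inequality $A^{2}\le K(h)B^{2}$ to two scalar polynomial inequalities that are tied together by the hypothesis $A \le B$. The key point is that one cannot square $A\le B$ directly, so the strategy is to first \emph{linearize} $A^{2}$ using the spectral bound on $A$, then exchange $A$ for $B$ at the linear level where monotonicity is safe, and finally absorb the resulting linear expression in $B$ into $K(h)B^{2}$ via a completion of squares.

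First, from $m I_n \le A \le M I_n$, the scalar inequality $(M-a)(a-m)\ge 0$ valid on $[m,M]$ together with the spectral theorem yields
\[
A^{2}\le (M+m)\,A - Mm\, I_n .
\]
Second, since $A\le B$ and $M+m>0$, scalar multiplication by $M+m$ preserves order and gives
\[
(M+m)A - Mm\, I_n \le (M+m)B - Mm\, I_n .
\]

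The third and decisive step is the scalar identity
\[
K(h)\,b^{2}-(M+m)\,b + Mm = \frac{1}{4Mm}\bigl((M+m)b-2Mm\bigr)^{2}\ge 0\qquad(b\in\mathbb{R}),
\]
which uses $K(h)=\dfrac{(h+1)^{2}}{4h}=\dfrac{(M+m)^{2}}{4Mm}$ so that the discriminant of the quadratic vanishes. Since $B\ge A\ge 0$ is self-adjoint, the spectral theorem applied to $B$ upgrades this to
\[
(M+m)B - Mm\, I_n \le K(h)\, B^{2}.
\]
Chaining the three displays gives $A^{2}\le K(h)B^{2}$, as required.

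The only delicate point worth naming as an ``obstacle'' is recognising that neither a spectral bound on $B$ nor a Kantorovich-type variational estimate is needed: everything is driven by the single algebraic identity $K(h)b^{2}-(M+m)b+Mm=\tfrac{1}{4Mm}((M+m)b-2Mm)^{2}$, which is precisely what lets the linearization of $A^{2}$ (using the $A$-side bounds $m,M$) be re-quadratized on the $B$-side without any loss beyond the Kantorovich factor. Lemma \ref{lem22}(iii) could be used as an alternative endgame by reducing $A^{2}\le K(h)B^{2}$ to $\|AB^{-1}\|\le\sqrt{K(h)}$, but that route requires a separate norm estimate and is strictly less direct than the three-line chain above.
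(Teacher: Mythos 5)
Your proof is correct. The paper itself gives no proof of this lemma --- it is stated as a known result with citations to Fujii--Izumino--Nakamoto--Seo and to Lin's Proposition 2.4 --- so there is nothing internal to compare against; your argument supplies a complete, self-contained justification. The three steps all check out: $(MI_n-A)(A-mI_n)\ge 0$ gives $A^{2}\le (M+m)A-MmI_n$; order is preserved when passing from $A$ to $B$ at the linear level; and since $K(h)=\frac{(M+m)^{2}}{4Mm}$ makes the discriminant of $K(h)b^{2}-(M+m)b+Mm$ vanish, the functional calculus applied to the self-adjoint $B$ yields $(M+m)B-MmI_n\le K(h)B^{2}$. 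This ``linearize, compare, requadratize'' chain is in fact essentially the argument in the cited sources, so your proof is both valid and the standard one.
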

We have the following squared inequalities for \eqref{Nf1}, \eqref{R1} and the second inequality in Lemma \ref{lemma1.3} by a direct consequence of Lemma \ref{sec3_Lin_lemma}, with $K(1/h)=K(h)$.

\begin{proposition}\label{sec3_new_prop_3.1}
Let $ 0 \leq v \leq1$. 
\begin{itemize}
\item[(i)] Let  $A,B \in \mathbb{M}_{n}$ with $W(A),W(B)\subset S_{\alpha}$  such  that   $0< mI_n\leq \Re (A), \Re(B) \leq  M I_n$ with $h:=M/m$. Then  we  have 
\begin{equation}\label{frt}
\Re^{2}(A \sharp_{v} B)   \leq    \sec^{4}(\alpha)K(h)
(\Re(A) \sharp_{v} \Re (B))^{2}. 
\end{equation}
\item[(ii)] Let  $A,B \in \mathbb{M}_{n}$ be  accretive  such  that   
$0< mI_n\leq \Re (A), \Re(B) \leq  M I_n$ with $h:=M/m$.
 Then we  have 
\begin{equation}
(\Re(A) \sharp_{v} \Re (B))^{2} \leq K(h)  \Re^{2}(A \sharp_{v} B).
\end{equation}
\item[(iii)] Let  $A \in \mathbb{M}_{n}$ with $W(A)\subset S_{\alpha}$  such  that  $0 < mI_n \leq \Re(A) \leq MI_n$ with $h:=M/m$.
Then for every  normalized positive linear map $\Phi$,  
\begin{equation}\label{215}
 \Phi  ^{2} (\Re^{-1} (A)) \leq \sec^{4}(\alpha)K(h)  \Phi  ^{2} ( \Re (A^{-1}) ).
\end{equation}
\end{itemize}
\end{proposition}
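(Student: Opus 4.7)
The overarching plan is to obtain each of the three assertions by squaring a first-order operator inequality already at our disposal. Since the squaring map is not monotone on the positive cone, we cannot simply square both sides; instead we invoke Lemma \ref{sec3_Lin_lemma}, which upgrades a relation $0\le X\le Y$ to $X^2\le K(h)Y^2$ provided that the smaller operator $X$ is sandwiched between scalar multiples of the identity with ratio $h$. In each part the work therefore reduces to (a) identifying the correct first-order inequality and (b) checking that the smaller side admits scalar bounds of the required form.

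For (ii), the input is \eqref{R1}, $\Re(A)\sharp_v\Re(B)\le \Re(A\sharp_v B)$, valid for accretive $A,B$. Under the hypothesis $mI_n\le \Re(A),\Re(B)\le MI_n$ the monotonicity of the weighted geometric mean of scalars gives $mI_n\le \Re(A)\sharp_v\Re(B)\le MI_n$, so Lemma \ref{sec3_Lin_lemma} applies directly with ratio $h=M/m$ and yields the desired squared inequality.

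For (i), we start instead from \eqref{Nf1}, $\Re(A\sharp_v B)\le \sec^2(\alpha)\bigl(\Re(A)\sharp_v\Re(B)\bigr)$. Now the smaller operator is $\Re(A\sharp_v B)$, and we need scalar bounds on it. The lower bound $\Re(A\sharp_v B)\ge mI_n$ follows from combining \eqref{R1} with $\Re(A)\sharp_v\Re(B)\ge mI_n$, and the upper bound comes from \eqref{Nf1} itself together with $\Re(A)\sharp_v\Re(B)\le MI_n$. Feeding this into Lemma \ref{sec3_Lin_lemma} and then absorbing the $\sec^4(\alpha)$ generated by squaring the $\sec^2(\alpha)$ prefactor of \eqref{Nf1} produces \eqref{frt}.

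For (iii) we begin with the second inequality of Lemma \ref{lemma1.3}, namely $\Re^{-1}(A)\le \sec^2(\alpha)\Re(A^{-1})$. Applying the positive normalized linear map $\Phi$ preserves order, so $\Phi(\Re^{-1}(A))\le \sec^2(\alpha)\Phi(\Re(A^{-1}))$. The hypothesis $mI_n\le \Re(A)\le MI_n$ inverts to $M^{-1}I_n\le \Re^{-1}(A)\le m^{-1}I_n$, and $\Phi(I_n)=I_l$ together with positivity transfers these to $M^{-1}I_l\le \Phi(\Re^{-1}(A))\le m^{-1}I_l$; the resulting ratio of extremes equals $M/m=h$, which is where the identity $K(1/h)=K(h)$ is convenient depending on how one orders the two endpoints. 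Lemma \ref{sec3_Lin_lemma} then yields \eqref{215}. The main item to watch throughout is the bookkeeping of $\sec(\alpha)$ factors after squaring, and especially verifying in (i) that the lower bound on $\Re(A\sharp_v B)$ really is $mI_n$ via the accretive-case inequality \eqref{R1} rather than some weaker constant degraded by $\cos^2(\alpha)$.
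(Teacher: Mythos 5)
Your overall strategy coincides with the paper's: the authors justify all three parts in a single sentence, stating that they follow from Lemma \ref{sec3_Lin_lemma} applied to \eqref{Nf1}, \eqref{R1} and the second inequality of Lemma \ref{lemma1.3}, respectively. Your treatment of (ii) and (iii) supplies exactly the checks one would want (the sandwich $mI_n\le \Re(A)\sharp_v\Re(B)\le MI_n$ by monotonicity of the weighted geometric mean, and $M^{-1}I_l\le \Phi(\Re^{-1}(A))\le m^{-1}I_l$ with ratio of extremes equal to $h$), and those two parts are correct.

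Part (i), however, contains a genuine gap, located precisely at the point you dismissed as bookkeeping. Lemma \ref{sec3_Lin_lemma} produces the constant $K(M'/m')$, where $[m',M']$ is a scalar sandwich for the \emph{smaller} operator, here $\Re(A\sharp_vB)$. You take $m'=m$ and $M'=\sec^{2}(\alpha)M$, so the lemma delivers $K(\sec^{2}(\alpha)h)$, not $K(h)$; since $K$ is increasing on $[1,\infty)$ and $\sec^{2}(\alpha)h\ge h\ge 1$, this is strictly weaker than \eqref{frt} whenever $\alpha>0$. Moreover the inflated upper endpoint cannot simply be improved to $M$: already in dimension one, taking $A=Me^{i\alpha}$ and $B=Me^{-i\alpha}$ gives $\Re(A)=\Re(B)=M\cos\alpha$ while $\Re(A\sharp B)=M=\sec(\alpha)\cdot(M\cos\alpha)$, so $\Re(A\sharp_vB)$ genuinely escapes above the upper bound imposed on $\Re(A)$ and $\Re(B)$, and the ratio fed into Lemma \ref{sec3_Lin_lemma} is genuinely larger than $h$. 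Thus your argument proves \eqref{frt} with $K(h)$ replaced by $K(\sec^{2}(\alpha)h)$, not \eqref{frt} itself; it is also not the lower bound (which is indeed $mI_n$ via \eqref{R1}, as you say) but the upper bound where the $\sec(\alpha)$ degradation enters. To be fair, the paper's own one-line justification gives no sandwich for $\Re(A\sharp_vB)$ at all and appears to inherit the same defect, but your write-up, by committing explicitly to the bounds $[m,\sec^{2}(\alpha)M]$, makes the mismatch visible without resolving it; closing it would require either a sharper a priori bound on $\Re(A\sharp_vB)$ with ratio $h$ or a different route to \eqref{frt}.
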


\begin{remark}\label{remark_3.1}
\begin{itemize}
\item[(a)] From Proposition \ref{sec3_new_prop_3.1} (i) and (ii), we have for accretive matrices $A$ and $B$,
$$
\Re^2(A\sharp_vB) \leq \sec^4(\alpha) K^2(h) \Re^2(A\sharp_vB),
$$
which implies the natural result $K(h) \sec(\alpha) \geq 1$.
In addition if we take $\alpha =0$, then $\Re(A)=A >0$ and $\Re(B)=B>0$.
Thus  we recover the natural result $K(h) \geq 1$.
\item[(b)] Since  $t^{\frac{1}{2}}$  is an operator monotone, from Proposition \ref{sec3_new_prop_3.1} (ii) we have
$$
\Re(A) \sharp_{v} \Re (B)  \leq    K^{1/2}(h)  \Re(A \sharp_{v} B),
$$
which  is  equivalent  to  the  following  inequality:
\begin{eqnarray}\label{RIM}
 K^{-1/2}(h)(\Re(A) \sharp_{v} \Re (B)) 
  \leq  \Re(A \sharp_{v} B). 
\end{eqnarray} Therefore, 
\eqref{RIM} gives a  reverse  of \eqref{Nf1}. 

\item[(c)] From  \eqref{R1}  and  \eqref{Nf1}, we have
\begin{eqnarray}\label{R7}
\Re(A) \sharp_{v} \Re (B)
  \leq     \Re(A \sharp_{v} B)  \leq  \sec^{2}(\alpha) ( \Re(A) \sharp_{v} \Re (B)) .
\end{eqnarray}
Proposition \ref{sec3_new_prop_3.1} (i) and (ii)  are  squares  of  the  double  inequalities  in  \eqref{R7},  respectively.
\item[(d)] Proposition \ref{sec3_new_prop_3.1} (iii)  shows that the squaring the both sides of the second inequality (after multiplying $K(h)$ to the right hand side) in Lemma \ref{lemma1.3} does not work directly when $\Phi$ is identity map. That is,
$$
 \Re^{-2} (A) \leq (\sec^{2}(\alpha)K^{1/2}(h))^{2}  \Re^2 (A^{-1}).
$$
If we square it, we have to pay the cost by multiplying the constant $K(h)$ to right hand side. 
\item[(e)]  From \cite[Theorem 2.9]{YangLu} with an operator monotonicity of $t^{1/2}$, we have
$$
 \Phi(\Re(A^{-1})) \leq K(h)  \Phi^{-1} ( \Re (A) ).
$$
On  the  other  hand,  by  Choi inequality \cite[Theorem 2.3.6]{BhatiaR2},
$$\Phi^{-1} (\Re (A)) \leq \Phi (\Re^{-1}(A)).$$
From  two  latter  relations,  it  follows  that 
\begin{align}\label{fff}
\Phi(\Re(A^{-1})) \leq K(h)\Phi (\Re^{-1}(A)).
\end{align} 
 On the other hand, we obtain the inequality
\begin{align}\label{ffff}
 \Phi (\Re^{-1} (A)) \leq \sec^{2}(\alpha)K^{1/2}(h) \Phi ( \Re (A^{-1}) ).
\end{align} 
The inequality  
\eqref{ffff}  is  a  reverse  of  \eqref{fff}  for  sector  matrices.
\item[(f)] For a normalized positive linear map $\Phi$ and $\Re(A^{-1})>0$, we have the following by Choi inequality \cite[Theorem 2.3.6]{BhatiaR2}:
\begin{equation}\label{sec3_Choi_ineq}
\Phi^{-1}\left(\Re\left(A^{-1}\right)\right)\leq \Phi\left(\Re^{-1}\left(A^{-1}\right)\right).
\end{equation}
By the similar with Proposition \ref{sec3_new_prop_3.1} (iii), if
$0< mI_n \leq \Re(A^{-1}) \le MI_n$ which is equivalent to 
$0< mI_n \le \Phi^{-1}(\Re(A^{-1}))\leq MI_n$, then we have 
$$
\Phi^{-2}\left(\Re\left(A^{-1}\right)\right)\le K(h) \Phi^2\left(\Re^{-1}\left(A^{-1}\right)\right).
$$
\end{itemize}
\end{remark}

\begin{corollary}\label{7}
Let  $A \in \mathbb{M}_{n}$  with $W(A)\subset S_{\alpha}$  such  that  $0 < mI_n \leq \Re (A) \leq MI_n$ with $h:=M/m$.
Then  for every normalized  positive  linear map $\Phi$,  
\begin{equation}\label{8}
|\Phi ( \Re^{-1} (A)) \Phi^{-1} ( \Re (A^{-1})) +\Phi^{-1} ( \Re (A^{-1}))   \Phi ( \Re^{-1} (A))|   \leq 
 2\sec^{2}(\alpha) K^{1/2}(h).
\end{equation}
\end{corollary}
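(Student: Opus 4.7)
The plan is to reduce the claim to a spectral-norm estimate on $\|XY\|$, where I abbreviate $X:=\Phi(\Re^{-1}(A))$ and $Y:=\Phi^{-1}(\Re(A^{-1}))$. Both $X$ and $Y$ are strictly positive: $\Re(A)>0$ forces $X>0$ by positivity of $\Phi$, while $\Re(A^{-1})>0$ gives $\Phi(\Re(A^{-1}))>0$ and hence $Y>0$ as its inverse. Since $X$ and $Y$ are Hermitian, $YX=(XY)^{*}$, so $XY+YX$ is Hermitian; via Lemma \ref{151}, the left-hand side of \eqref{8} is then interpretable as the spectral norm $\|XY+YX\|$ compared against $2\sec^{2}(\alpha)K^{1/2}(h)\,I_{n}$.

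First I would invoke Proposition \ref{sec3_new_prop_3.1}(iii), which in this notation is exactly the operator inequality $X^{2}\le \sec^{4}(\alpha)K(h)\,Y^{-2}$. Then I apply Lemma \ref{lem22}(iii) with $A:=X^{2}$, $B:=Y^{-2}$, and $r:=\sec^{4}(\alpha)K(h)$; using $(X^{2})^{1/2}=X$ and $(Y^{-2})^{-1/2}=Y$ (both $X,Y>0$), the equivalence yields
$$
\|XY\|\le \sec^{2}(\alpha)K^{1/2}(h).
$$

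Because $YX=(XY)^{*}$, the spectral norm is preserved under adjoints, so $\|YX\|=\|XY\|$, and the triangle inequality gives
$$
\|XY+YX\|\le \|XY\|+\|YX\|\le 2\sec^{2}(\alpha)K^{1/2}(h).
$$
Lemma \ref{151} then converts this back to the matrix-modulus form in \eqref{8}, completing the argument. I do not expect any genuine obstacle here: the corollary is essentially a repackaging of the squared estimate \eqref{215} through the standard Kittaneh equivalence in Lemma \ref{lem22}(iii); the only care needed is matching the exponents when applying that equivalence and observing the Hermiticity of $XY+YX$ so that operator norm and matrix modulus may be interchanged via Lemma \ref{151}.
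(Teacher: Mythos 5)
Your proposal is correct and follows essentially the same route as the paper: both derive $\bigl\|\Phi(\Re^{-1}(A))\,\Phi^{-1}(\Re(A^{-1}))\bigr\|\le \sec^{2}(\alpha)K^{1/2}(h)$ from Proposition \ref{sec3_new_prop_3.1}(iii) via Lemma \ref{lem22}(iii), and then pass to the symmetrized sum. The only (cosmetic) difference is in that last step, where you use the triangle inequality and adjoint-invariance of the norm while the paper sums two positive semidefinite $2\times 2$ block matrices via Lemma \ref{151}; the two arguments are equivalent.
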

\begin{proof}
By the use of Lemma \ref{lem22} (iii) with Proposition \ref{sec3_new_prop_3.1} (iii),
we have
\begin{equation}\label{sec3_cor3.1_proof_eq01}
\left\| \Phi\left(\Re^{-1}\left( A\right) \Phi^{-1}\left(\Re\left(A^{-1}\right)\right)\right)\right\|\leq \sec^2\left(\alpha \right) K^{1/2}(h)
\end{equation}
Using  Lemma \ref{151}  with  \eqref{sec3_cor3.1_proof_eq01},  we  have 
  \begin{equation*}
\begin{bmatrix}
K^{1/2}(h) \sec^{2}(\alpha) & \Phi ( \Re^{-1} (A)) \Phi^{-1} ( \Re (A^{-1}))     \\
( \Phi ( \Re^{-1} (A)) \Phi^{-1} ( \Re (A^{-1})) )^{*}  &   K^{1/2}(h) \sec^{2}(\alpha)
\end{bmatrix}
\geq 0
\end{equation*}
and 
 \begin{equation*}
\begin{bmatrix}
K^{1/2}(h) \sec^{2}(\alpha)  & \Phi^{-1} ( \Re (A^{-1}))   \Phi ( \Re^{-1} (A))   \\
(  \Phi^{-1} ( \Re (A^{-1}))  \Phi ( \Re^{-1} (A)))^{*}  &  K^{1/2}(h) \sec^{2}(\alpha) 
\end{bmatrix}
\geq 0.
\end{equation*}
Summing  up    above  two matrices, and then dividing by 2 and  using Lemma \ref{151}, we  get   the  desired  result.
\end{proof}

%

A  map  $\Phi: \mathbb{M}_n^k:=\mathbb{M}_n \times \cdots  \times  \mathbb{M}_n \rightarrow  \mathbb{M}_l$
is  said  to  be  a multilinear  whenever  it  is  linear  in  each  of  its  variable
and also is  called a positive   if  $A_{i} \geq 0$  for  $i=1, \cdots, k$  implies  that
$\Phi(A_{1}, \cdots, A_{k}) \geq 0$.  Moreover,
$\Phi$  is  called  a normalized  if
$\Phi(I_n, \cdots, I_n)=I_l$.

\begin{lemma}{\bf (\cite{MKianMDehgani})}
Let $A_{i} \in \mathbb M_{n}(1  \leq i  \leq k)$ such  that  $ 0<mI_{n} \leq  A_{i} \leq  MI_{n}$ with $h:=M/m$.   Then  for  every  positive  multilinear   map $\Phi,$ 
\begin{equation}\label{mf12}
\Phi(A^{-1}_{1}, \cdots, A^{-1}_{k})  \leq  
K(h^k) \Phi(A_{1}, \cdots, A_{k})^{-1}.
\end{equation}
\end{lemma}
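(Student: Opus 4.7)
My plan is to reduce the multilinear Kantorovich-type inequality to the scalar bound $\mu + m^k M^k \mu^{-1} \le m^k + M^k$ (valid for $\mu \in [m^k, M^k]$) by expanding $\Phi$ against the joint spectral decomposition of $(A_1,\ldots,A_k)$ and using multilinearity together with positivity.  As a preliminary reduction I would normalize so that $\Phi(I_n,\ldots,I_n) = I_l$: replacing $\Phi$ by $C^{-1/2}\Phi(\cdot)C^{-1/2}$ with $C := \Phi(I_n,\ldots,I_n) > 0$ makes the map unital without affecting positivity or multilinearity.

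Writing each $A_i = \sum_{j_i}\lambda_{j_i}^{(i)} P_{j_i}^{(i)}$ spectrally and expanding by the multilinearity of $\Phi$ gives
\begin{equation*}
\Phi(A_1,\ldots,A_k)=\sum_{\mathbf j}\mu_{\mathbf j}\,Q_{\mathbf j},\qquad \Phi(A_1^{-1},\ldots,A_k^{-1})=\sum_{\mathbf j}\mu_{\mathbf j}^{-1}\,Q_{\mathbf j},\qquad \sum_{\mathbf j}Q_{\mathbf j}=I_l,
\end{equation*}
where $\mu_{\mathbf j}:=\prod_i\lambda_{j_i}^{(i)}\in[m^k,M^k]$ and $Q_{\mathbf j}:=\Phi(P_{j_1}^{(1)},\ldots,P_{j_k}^{(k)})\ge 0$ by positivity of $\Phi$.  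The family $\{Q_{\mathbf j}\}$ is essentially a positive operator-valued resolution of the identity adapted to $\Phi$ and the data $(A_1,\ldots,A_k)$.  Multiplying the scalar bound $\mu_{\mathbf j}+m^kM^k\mu_{\mathbf j}^{-1}\le m^k+M^k$ by $Q_{\mathbf j}\ge 0$ and summing then yields
\begin{equation*}
\Phi(A_1,\ldots,A_k)+m^kM^k\,\Phi(A_1^{-1},\ldots,A_k^{-1})\le(m^k+M^k)\,I_l.
\end{equation*}

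To finish, I would set $B:=\Phi(A_1,\ldots,A_k)$, which lies in $[m^kI_l,M^kI_l]$ by monotonicity of $\Phi$, and apply the scalar identity $(m^k+M^k)t - t^2 \le \tfrac{(m^k+M^k)^2}{4}$ spectrally to $B$ (completing the square) to obtain $(m^k+M^k)I_l - B \le K(h^k)\,m^kM^k\,B^{-1}$.  Combined with the previous display this produces the claimed $\Phi(A_1^{-1},\ldots,A_k^{-1}) \le K(h^k)\,\Phi(A_1,\ldots,A_k)^{-1}$.

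The step I expect to require the most thought is the joint spectral expansion: the natural alternative of passing through the tensor product $A_1\otimes\cdots\otimes A_k$ and the linear extension $\tilde\Phi$ to $\mathbb M_{n^k}$ runs into the obstacle that $\tilde\Phi$ need not be positive on all of $\mathbb M_{n^k}$, only on positive elementary tensors.  The spectral expansion sidesteps this entirely by exhibiting the relevant inequality as a non-negative combination of the positive operators $Q_{\mathbf j}=\Phi(P_{j_1}^{(1)},\ldots,P_{j_k}^{(k)})$, so that positivity of $\Phi$ on elementary positive tensors is all that is needed; the remaining steps (normalization and completing the square) are routine.
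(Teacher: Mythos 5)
The paper does not actually prove this lemma: it is imported verbatim from \cite{MKianMDehgani}, so there is no internal proof to compare against. Your argument is the standard operator--Kantorovich scheme transplanted to the multilinear setting, and its core is sound. The joint spectral expansion $\Phi(A_1,\dots,A_k)=\sum_{\mathbf j}\mu_{\mathbf j}Q_{\mathbf j}$ with $Q_{\mathbf j}=\Phi(P^{(1)}_{j_1},\dots,P^{(k)}_{j_k})\ge 0$, $\mu_{\mathbf j}\in[m^k,M^k]$ and $\sum_{\mathbf j}Q_{\mathbf j}=\Phi(I_n,\dots,I_n)$ is exactly the right device; your first display is in effect \cite[Lemma 2.5]{MKianMDehgani} with $r=-1$, i.e.\ the inequality the paper itself quotes as \eqref{mf4}. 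The scalar bound $\mu+m^kM^k\mu^{-1}\le m^k+M^k$ on $[m^k,M^k]$ is correct, and the completion of the square $(m^k+M^k)I_l-B\le \tfrac{(m^k+M^k)^2}{4}B^{-1}=K(h^k)\,m^kM^k\,B^{-1}$ for $B=\Phi(A_1,\dots,A_k)\in[m^kI_l,M^kI_l]$ legitimately passes through the functional calculus because every operator involved is a function of $B$. Chaining the two estimates gives the claim.

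The one step that fails is the ``preliminary reduction'' to the unital case. The target inequality is not invariant under the congruence $\Phi\mapsto C^{-1/2}\Phi(\cdot)C^{-1/2}$: if you prove it for the normalized map and undo the congruence, you obtain $\Phi(A_1^{-1},\dots,A_k^{-1})\le K(h^k)\,C\,\Phi(A_1,\dots,A_k)^{-1}C$, not the stated bound, because the inverse on the right-hand side transforms the wrong way. Indeed the statement is simply false for non-normalized maps (take $k=1$ and $\Phi(A)=2A$ with $A=I_n$, $m=M=1$: then the claim reads $2I_n\le \tfrac12 I_n$), and moreover $C=\Phi(I_n,\dots,I_n)$ need not be invertible for a general positive multilinear map. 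The hypothesis ``normalized'' is missing from the lemma as printed but is clearly intended --- the source assumes it, and the paper itself writes ``normalized'' when it reuses this material in Remark \ref{re1} and relies on $\Phi(I_n,\dots,I_n)=I_l$ in \eqref{mf4}. Once you add that hypothesis and delete the invalid reduction, your proof is complete and correct.
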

Let  $A_{i} \in \mathbb M_{n}(1  \leq i  \leq k)$ be  accretive   such  that  $ 0<mI_{n} \leq  \Re (A_{i}) \leq  MI_{n}$. Then 
by  \eqref{mf12} and Lemma \ref{lemma1.3}, we  get  
 \begin{equation}\label{mf2}
\Phi( \Re (A^{-1}_{1}), \cdots,  \Re(A^{-1}_{k}))  \leq  
K(h^k)\Phi( \Re A_{1}, \cdots, \Re A_{k})^{-1},
\end{equation}
 where   $\Phi$  is  a  positive  multilinear   map. 
In  the following,  we  present  a  square of  \eqref{mf2}.  
\begin{theorem}\label{TMM}
Let  $A_{i} \in \mathbb M_{n}(1  \leq i  \leq k)$ be   accretive   such  that  $ 0<mI_{n} \leq  \Re (A_{i}) \leq  MI_{n}$ with $h:=M/m$.   Then  for  every  
positive  multilinear   map  $\Phi$ 
 \begin{equation}\label{mf9}
\Phi^{2}( \Re (A^{-1}_{1}), \cdots,  \Re(A^{-1}_{k}))  \leq  
K^{2} (h^k)\Phi^{-2}( \Re A_{1}, \cdots, \Re A_{k}).
\end{equation}
\end{theorem}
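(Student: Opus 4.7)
The plan is to square the already-established inequality \eqref{mf2} by invoking the Lin-type squaring lemma (Lemma \ref{sec3_Lin_lemma}), in exact parallel with the strategy used in Proposition \ref{sec3_new_prop_3.1}(iii). Concretely, \eqref{mf2} reads
\[
\Phi\bigl(\Re(A_1^{-1}),\ldots,\Re(A_k^{-1})\bigr) \le K(h^k)\,\Phi(\Re A_1,\ldots,\Re A_k)^{-1},
\]
so my aim is to apply Lemma \ref{sec3_Lin_lemma} with $A := \Phi(\Re(A_1^{-1}),\ldots,\Re(A_k^{-1}))$ and $B := K(h^k)\,\Phi(\Re A_1,\ldots,\Re A_k)^{-1}$, observing that $B^2 = K^2(h^k)\,\Phi^{-2}(\Re A_1,\ldots,\Re A_k)$ is precisely the shape of the right-hand side of \eqref{mf9}.

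The preparatory step is to obtain two-sided bounds of ratio $h^k$ on the smaller operator $A$. The hypothesis $mI_n \le \Re A_i \le MI_n$ combined with the first inequality of Lemma \ref{lemma1.3} (which applies because the $A_i$ are accretive) gives the upper estimate $\Re(A_i^{-1}) \le \Re^{-1}(A_i)\le m^{-1}I_n$, and one should obtain the matching lower estimate $\Re(A_i^{-1}) \ge M^{-1}I_n$ in the same spirit. Once this sandwich is established, the monotonicity and positivity of the normalized multilinear map $\Phi$ transfer the bound one argument at a time, producing
\[
M^{-k}I_l \le \Phi\bigl(\Re(A_1^{-1}),\ldots,\Re(A_k^{-1})\bigr) \le m^{-k}I_l,
\]
so that the ratio of the bracketing constants is exactly $h^k$, matching the Kantorovich constant $K(h^k)$ appearing on the right of \eqref{mf2}. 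Applying Lemma \ref{sec3_Lin_lemma} then yields $A^2 \le K(h^k)\,B^2$, which after the substitution for $B^2$ gives the desired inequality (up to an extra $K(h^k)$ factor from the squaring step, which must be absorbed into the stated constant).

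The main obstacle I expect is the lower bound on $\Re(A_i^{-1})$ in the purely accretive setting: for strictly positive $A_i$ this is automatic since $\Re(A_i^{-1})=A_i^{-1}\ge M^{-1}I_n$, but for a general accretive matrix one must work with the identity $\Re(A^{-1})=(\Re A)^{-1/2}(I+Y^2)^{-1}(\Re A)^{-1/2}$ where $Y=(\Re A)^{-1/2}\Im(A)(\Re A)^{-1/2}$, and control the norm of $Y$. Without an additional sector-type constraint the size of $Y$ is not uniformly bounded, so this step is where the argument is most delicate; a careful analysis of this point (or an implicit sector assumption) is the crux of the proof, after which the squaring via Lemma \ref{sec3_Lin_lemma} is routine.
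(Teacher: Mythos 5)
Your proposal has two genuine gaps, and the second one is fatal to the stated constant. First, the preparatory sandwich $M^{-1}I_n \le \Re(A_i^{-1}) \le m^{-1}I_n$ does not hold for merely accretive $A_i$: the upper bound follows from Lemma \ref{lemma1.3}, but for the lower bound the scalar example $a+ib$ with $m\le a\le M$ and $b\to\infty$ gives $\Re\bigl((a+ib)^{-1}\bigr)=a/(a^2+b^2)\to 0$, so no lower bound in terms of $m,M$ exists without a sector hypothesis; you flag this yourself but do not resolve it, and the theorem is stated for accretive matrices. Second, and more seriously, even granting such a sandwich, applying Lemma \ref{sec3_Lin_lemma} to $A:=\Phi(\Re(A_1^{-1}),\ldots,\Re(A_k^{-1}))$ and $B:=K(h^k)\Phi(\Re A_1,\ldots,\Re A_k)^{-1}$ yields $A^2\le K(h^k)B^2=K^{3}(h^k)\Phi^{-2}(\Re A_1,\ldots,\Re A_k)$. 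The extra factor $K(h^k)$ from the squaring lemma cannot be ``absorbed'': $K^{3}(h^k)>K^{2}(h^k)$ whenever $h>1$, so your route proves a strictly weaker inequality than \eqref{mf9}. This is exactly the phenomenon the paper points out in Remark \ref{remark_3.1}(d): naive squaring always costs an additional Kantorovich constant.

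The paper avoids both problems by never touching $\Re(A_i^{-1})$ from below and by replacing the squaring lemma with the Lin-type norm argument. It starts from the additive Kian--Dehgani inequality $M^km^k\,\Phi(\Re^{-1}(A_1),\ldots,\Re^{-1}(A_k))+\Phi(\Re A_1,\ldots,\Re A_k)\le (M^k+m^k)I$, which only requires the bounds on $\Re(A_i)$; then inserts $\Re(A_i^{-1})\le\Re^{-1}(A_i)$ (the accretive half of Lemma \ref{lemma1.3}); then applies $\|XY\|\le\tfrac14\|X+Y\|^2$ (Lemma \ref{lem22}(i)) to the two summands to obtain $\|\Phi(\Re(A_1^{-1}),\ldots)\,\Phi(\Re A_1,\ldots)\|\le K(h^k)$; and finally converts this norm bound into the operator inequality \eqref{mf9} via Lemma \ref{lem22}(iii). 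That route delivers the sharp constant $K^2(h^k)$ with no loss. To repair your argument you would need to abandon Lemma \ref{sec3_Lin_lemma} here and adopt this additive-plus-norm mechanism instead.
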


\begin{proof}
If  we  apply  \cite[Lemma 2.5]{MKianMDehgani} with  $r=-1,$  then  we  have 
 \begin{equation}\label{mf4}
 M^{k}m^{k}\Phi( \Re ^{-1}(A_{1}), \cdots,  \Re^{-1}(A_{k}))  
 +\Phi( \Re A_{1}, \cdots, \Re A_{k})
 \leq  
M^{k}+m^{k}.
\end{equation}
On  the  other  hand,   we have $\Re^{-1} (A_i) -\Re(A_i^{-1} )\geq 0$ by   Lemma \ref{lemma1.3}, therefore we have,
\begin{equation}\label{mf5}
\Phi( \Re ^{-1}(A_{1}), \cdots,  \Re^{-1}(A_{k}))  
  \geq  \Phi( \Re( A^{-1}_{1}), \cdots, \Re (A^{-1}_{k})).
\end{equation}

From \eqref{mf4} and  \eqref{mf5},  we  obtain  
\begin{equation}\label{mf6}
 M^{k}m^{k} \Phi( \Re (A_{1}^{-1}), \cdots,  \Re(A_{k}^{-1}))  
 +\Phi( \Re A_{1}, \cdots, \Re A_{k})
 \leq  
M^{k}+m^{k}.
\end{equation}
By   applying  Lemma  
\ref{lem22} (i) and  \eqref{mf6},  respectively,  it  follows  that  
\begin{eqnarray}\label{mf7}
&& M^{k}m^{k}\|
\Phi( \Re(A_{1} ^{-1}), \cdots,  \Re(A_{k}^{-1}))  
 \Phi( \Re( A_{1}), \cdots, \Re (A_{k}))\|   
 \nonumber \\
 &&\leq  \frac{1}{4} \| M^{k}m^{k}\Phi( \Re(A_{1} ^{-1}), \cdots,  \Re(A_{k}^{-1}))  + \Phi( \Re( A_{1}), \cdots, \Re (A_{k})) \|^{2}  \nonumber   \\
 && \leq   \frac{1}{4}(M^{k}+m^{k})^{2}.
\end{eqnarray}
This completes  the  proof, by Lemma  
\ref{lem22} (iii). 
\end{proof}

\begin{remark}
Theorem \ref{TMM} gives a general result in the following sense.
\begin{itemize}
\item[(a)]
If  we put  $k=1,$  then  Theorem \ref{TMM} recovers  \cite[Theorem 2.9]{YangLu}.
\item[(b)]
For a special  case such that $A_{i} \geq 0\,\,(1  \leq i  \leq k),$  Theorem \ref{TMM}  recovers  
\cite[Theorem 2.6]{MKianMDehgani}.
\end{itemize}
\end{remark}

\begin{remark}\label{re1}
Let  $A_{i} \in \mathbb M_{n}(1  \leq i  \leq k)$ be   accretive   such  that  $ 0<mI_{n} \leq  \Re (A_{i}) \leq  MI_{n}$. 
 If $0\leq p\leq 2$, then $0\leq\frac{p}{2}\leq1$.  By
Theorem  \ref{TMM}
 and the L\"{o}wner-Heinz inequality (see e.g., \cite[Theorem 7.10]{Zhang_book}) we have
 \begin{equation}\label{mf1}
\Phi^{p}( \Re (A^{-1}_{1}), \cdots,  \Re(A^{-1}_{k}))  \leq  
K^{p}(h^k) \Phi^{-p}( \Re A_{1}, \cdots, \Re A_{k}).
\end{equation}
 for  every  positive  normalized  multilinear  map  $\Phi:\mathbb{M}^{k}_{n}\rightarrow  \mathbb{M}_{l}$ and Kantorovich constant $K(h)$ with $h=\frac{M}{m}.$
 If  $p >2,$  then   using  a  similar method    in  Theorem \ref{TMM}  and  using  of  Lemma  \ref{lem22} (ii),  we  get
  \begin{equation}\label{mf11}
\Phi^{p}( \Re (A^{-1}_{1}), \cdots,  \Re(A^{-1}_{k}))  \leq  
K^{p}(h^k) \Phi^{-p}( \Re A_{1}, \cdots, \Re A_{k}).
\end{equation}
\end{remark}

\section*{Acknowledgements}
The authors would like to thank the referees for their careful and insightful comments to improve our manuscript. 
The author (S.F.) was partially supported by JSPS KAKENHI Grant Number 16K05257 and 21K03341.

\vskip 0.3 true cm

{\tiny (L. Nasiri) Department of Mathematics and Computer Science, Faculty of Science, Lorestan
	University, Khorramabad, Iran}

{\tiny \textit{E-mail address:} leilanasiri468@gmail.com}

{\tiny(S. Furuichi)
 Department of Information Science, College of Humanities and Sciences, Nihon University, 3-25-40, Sakurajyousui, Setagaya-ku, Tokyo, 156-8550, Japan}
 {\tiny   \textit{E-mail address:} furuichi.shigeru@nihon-u.ac.jp}

\end{document}